\documentclass[a4paper,11pt]{amsart}
\usepackage[english]{babel}
\usepackage{amsmath,tikz-cd}
\usepackage{amssymb}
\usepackage{mathrsfs}
\usepackage{enumerate}
\usepackage{mathtools,yfonts}
\usepackage{tikz,tkz-euclide}
%\usetkzobj{all}
\usepackage{pgfplots}
\usepackage{hyperref}
%\usetikzlibrary{trees}
\usetikzlibrary{arrows}
\usepackage{makecell}

\usepackage{tcolorbox}
\usepackage[]{algorithm2e}

\oddsidemargin = 0.0mm
\evensidemargin = 0.0mm
\topmargin = 10mm
\textheight = 239mm
\textwidth = 170mm
\voffset = -10mm
\hoffset = -5.5mm

\renewcommand{\P}{\mathbb P}
\newcommand{\R}{\mathbb R}

\DeclareMathOperator{\Bl}{Bl}
\DeclareMathOperator{\Cl}{Cl}

\DeclareMathOperator{\NE}{NE}

\DeclareMathOperator{\mult}{mult}

\DeclareMathOperator{\Exc}{Exc}

\DeclareMathOperator{\Eff}{Eff}
\DeclareMathOperator{\Nef}{Nef}
\DeclareMathOperator{\Mov}{Mov}
\DeclareMathOperator{\Pic}{Pic}

\DeclareMathOperator{\Cox}{Cox}

\renewcommand{\P}{\mathbb{P}}

\newtheorem{thm}{Theorem}[section]

\newtheorem{Lemma}[thm]{Lemma}
\newtheorem{Proposition}[thm]{Proposition}

\newtheorem{Corollary}[thm]{Corollary}

\theoremstyle{definition}

\newtheorem{Definition}[thm]{Definition}
\newtheorem{Remark}[thm]{Remark}

\hypersetup{pdfpagemode=UseNone}
\hypersetup{pdfstartview=FitH}

%immagini
\graphicspath{{immagini/}}
\usepackage{pgf,tikz,pgfplots}
\pgfplotsset{compat=1.15}
\usepackage{mathrsfs}
\usetikzlibrary{arrows}

\begin{document}

\title{Cox rings of blow-ups of multiprojective spaces}

\author[Michele Bolognesi]{Michele Bolognesi}
\address{\sc Michele Bolognesi\\
IMAG - Universit\'e de Montpellier, CNRS\\
Place Eug\`ene Bataillon\\
34095 Montpellier Cedex 5\\ France}
\email{michele.bolognesi@umontpellier.fr}

\author[Alex Massarenti]{Alex Massarenti}
\address{\sc Alex Massarenti\\ Dipartimento di Matematica e Informatica, Universit\`a di Ferrara, Via Machiavelli 30, 44121 Ferrara, Italy}
\email{msslxa@unife.it}

\author[Elena Poma]{Elena Poma}
\address{\sc Elena Poma\\ IMAG - Universit\'e de Montpellier, Place Eug\`ene Bataillon, 34095 Montpellier Cedex 5, France and Dipartimento di Matematica e Informatica, Universit\`a di Ferrara, Via Machiavelli 30, 44121 Ferrara, Italy}
\email{elena.poma@umontpellier.fr, elena.poma@unife.it}

\date{\today}
\subjclass[2020]{Primary 14E30; Secondary 14M99, 14N05}
\keywords{Mori dream spaces; Cox rings; Fano varieties}

\begin{abstract}
Let $X^{1,n}_r$ be the blow-up of $\mathbb{P}^1\times\mathbb{P}^n$ in $r$ general points. We describe the Mori cone of $X^{1,n}_r$ for $r\leq n+2$ and for $r = n+3$ when $n\leq 4$. Furthermore, we prove that $X^{1,n}_{n+1}$ is log Fano and give an explicit presentation for its Cox ring.
\end{abstract}

\maketitle

\setcounter{tocdepth}{1}

\tableofcontents

\section{Introduction}
\textit{Mori dream spaces}, introduced by Y. Hu and S. Keel in \cite{HK00}, are varieties whose total coordinate ring, called the \textit{Cox ring}, is finitely generated. The birational geometry of a Mori dream space is encoded in its cone of effective divisors together with a chamber decomposition on it, called \textit{Mori chamber decomposition} whose chambers correspond to the nef cones of the birational models of the Mori dream space.

Finite generation of the Cox rings of blow-ups of projective spaces and products of projective spaces has been extensively investigated \cite{Mu01}, \cite{CT06}, \cite{AC17}, \cite{LP17}, \cite{GPP23}. In \cite[Theorem 1.3]{CT06} A. M. Castravet and J. Tevelev proved that the blow-up of $(\mathbb{P}^n)^s$ in $r$ general points is a Mori dream space if and only if 
$$
\frac{1}{s+1}+\frac{1}{r-n-1}+\frac{1}{n+1} > 1.
$$
Indeed, when the above inequality is not satisfied the effective cones of these blow-ups are not finitely generated and the proof of this last fact relies on the symmetries of their Picard groups which carry a natural Weyl group action. For products of projective spaces with unbalanced dimensions this is not the case. In this paper we push a little further the investigation of the birational geometry of these varieties initiated by T. Grange, E. Postinghel and A. Prendergast-Smith in \cite{GPP23}.

Let $X^{1,n}_r$ be the blow-up of $\mathbb{P}^1\times\mathbb{P}^n$ in $r$ general points $p_1,\dots,p_r \in \mathbb{P}^1\times\mathbb{P}^n$. We will denote by $\pi:X^{1,n}_r\rightarrow \mathbb{P}^1\times\mathbb{P}^n$ the blow-down morphism and by $\pi_1:\mathbb{P}^1\times\mathbb{P}^n\rightarrow\mathbb{P}^1$, $\pi_2:\mathbb{P}^1\times\mathbb{P}^n\rightarrow\mathbb{P}^n$ the projections onto the factors. Moreover, let us denote by $\widetilde{\pi}_1,\widetilde{\pi}_2$ the morphisms from $X^{1,n}_r$ to $\P^1$ and $\P^n$ induced by the projections.  We summarize the situation in the following diagram
$$
\begin{tikzcd}
             & {X^{1,n}_r} \arrow[d, "\pi"] \arrow[ldd, "\widetilde{\pi}_1"', bend right] \arrow[rdd, "\widetilde{\pi}_2", bend left] &              \\
             & \mathbb{P}^1\times\mathbb{P}^n \arrow[ld, "\pi_1"'] \arrow[rd, "\pi_2"]                                                &              \\
\mathbb{P}^1 &                                                                                                                        & \mathbb{P}^n
\end{tikzcd}
$$
Let $\Pic(X^{1,n}_r)$ denote the Picard group of $X^{1,n}_r$ and $N_1(X^{1,n}_r)$ the $\R$-vector space of $1$-dimensional cycles modulo numerical equivalence. Throughout the paper we will denote by:
\begin{itemize}
\item[-] $H_1$ the pull-back of a point of $\mathbb{P}^1$ via $\widetilde{\pi}_1$;
\item[-] $H_2$ the pull-back of a hyperplane in $\mathbb{P}^n$ via $\widetilde{\pi}_2$;
\item[-] $E_i$ the exceptional divisor over $p_i$ for $i = 1,\dots,r$;
\end{itemize}
and by 
\begin{itemize}
\item[-] $h_1$ the class of a general fiber of $\widetilde{\pi}_2$;
\item[-] $h_2$ the class of a line in a general fiber of $\widetilde{\pi}_1$;
\item[-] $e_i$ the class of a line in the exceptional divisor $E_i$ for $i = 1,\dots, r$.
\end{itemize}
We have that $\Pic(X^{1,n}_r) = \mathbb{Z}[H_1,H_2,E_1,\dots,E_r]$ and $N_1(X^{1,n}_r) = \mathbb{Z}[h_1,h_2,e_1,\dots,e_r]$.

Our main results on the Mori cone of $X^{1,n}_r$ in Propositions \ref{Mori_n+1}, \ref{Mori_n+2}, \ref{Mori_n+3} can be summerized as follows:
\begin{thm}
The Mori cone of $X^{1,n}_r$ is given by
$$
\NE(X^{1,n}_r)=\langle h_{1}-e_{i},h_{2}-e_{i},e_{i}\rangle
$$
for all $r\leq n+1$. Furthermore, 
$$
\NE(X^{1,n}_{n+2})=\langle h_{1}-e_{i},h_{2}-e_{i},e_{i},h_{1}+nh_{2}-e_{1}-\cdots-e_{n+2}\rangle
$$  
and if $n\leq 4$ then
$$\NE(X^{1,n}_{n+3})=\langle h_{1}-e_{i},h_{2}-e_{i},e_{i},h_{1}+nh_{2}-e_{i_{1}}-\dots-e_{i_{n+2}}\rangle$$ 
for $i,i_{1},\dots,i_{n+2}\in\{1,\dots,n+3\}$.
\end{thm}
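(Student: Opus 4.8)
The inclusion $\supseteq$ is the easy direction: I would represent $h_1-e_i$ by the strict transform of $\pi_2^{-1}(\pi_2(p_i))\cong\P^1$, $h_2-e_i$ by the strict transform of a line of $\pi_1^{-1}(\pi_1(p_i))\cong\P^n$ through $p_i$, $e_i$ by a line of $E_i$, and the classes $h_1+nh_2-e_{i_1}-\cdots-e_{i_{n+2}}$ by explicit irreducible rational curves constructed below. Since the cones on the right are polyhedral, hence closed, it then remains to show that the class of every irreducible curve $C\subseteq X^{1,n}_r$ lies in the relevant cone. If $C\subseteq E_i$ then $[C]\in\langle e_i\rangle$, so I may assume $C\not\subseteq E_i$ for every $i$; writing $[C]=d_1h_1+d_2h_2-\sum_i m_ie_i$ and using $H_1\cdot h_1=H_2\cdot h_2=1$, $E_i\cdot e_i=-1$, the remaining pairings being zero, I get $d_1=C\cdot H_1\ge0$, $d_2=C\cdot H_2\ge0$ (as $H_1,H_2$ are nef, being pulled back from the factors) and $m_i=C\cdot E_i\ge0$. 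A short linear‑programming computation shows that such a class lies in $\langle h_1-e_i,h_2-e_i,e_i\rangle$ exactly when $\sum_i m_i\le d_1+d_2$, and lies in the enlargement by the rays $h_1+nh_2-e_{i_1}-\cdots-e_{i_{n+2}}$ exactly when a nonnegative combination of those rays can be subtracted to reach a class of the first kind. So the whole theorem reduces to bounding $\sum_i m_i$ against $d_1+d_2$, with extra bookkeeping for $r\ge n+2$.

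\textbf{The range $r\le n+1$.} The plan here is to prove that $N:=H_1+H_2-E_1-\cdots-E_r$ is nef: then $0\le N\cdot C=d_1+d_2-\sum_i m_i$ finishes it. I would in fact show that $|N|$ is base point free. Let $s\colon\P^1\times\P^n\hookrightarrow\P^{2n+1}$ be the Segre embedding, with image $V$ a variety of minimal degree ($\deg V=n+1=\codim V+1$); then $|\pi^{*}\O(1,1)|$ is the hyperplane system of $V$. Away from the exceptional divisors the base locus of $|N|$ is $s^{-1}\bigl(\langle s(p_1),\dots,s(p_r)\rangle\cap V\bigr)$, and since a general $\P^{n}$ meets $V$ transversally in $\deg V=n+1$ linearly independent points, the span of $n+1$ general points of $V$ meets $V$ in exactly those $n+1$ points; restricting to a coordinate subspace, the same holds for any $r\le n+1$ of them, so $|N|$ is base point free off $\bigcup_iE_i$. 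Along each $E_i\cong\P^{n}$ one has $N|_{E_i}=\O_{E_i}(1)$, and the restriction sends $F\in H^{0}(X^{1,n}_r,N)$ — a $(1,1)$-form through $p_1,\dots,p_r$ — to $dF|_{p_i}\in H^{0}(E_i,\O(1))$; as the $(1,1)$-forms are $(2n+2)$-dimensional and a double point at $p_i$ plus a simple point at each of the other $r-1$ points is $n+1+r\le 2n+2$ conditions, this map is surjective for general $p_j$, so $|N|$ has no base points along $E_i$ either. Hence $N$ is nef.

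\textbf{The case $r=n+2$.} Now $\Pi:=\langle s(p_1),\dots,s(p_{n+2})\rangle$ is a general $\P^{n+1}$, so $\Pi\cap V$ is a reduced irreducible rational normal curve of degree $n+1$ through $p_1,\dots,p_{n+2}$; a dimension count shows it is the graph of a degree‑$n$ morphism $\P^1\to\P^n$, i.e. it has bidegree $(1,n)$, so its strict transform $\widetilde B$ has class $R:=h_1+nh_2-e_1-\cdots-e_{n+2}$, which is therefore effective. For the converse I would show that the base locus of $|N|$, with $N=H_1+H_2-E_1-\cdots-E_{n+2}$, is exactly $\widetilde B$: off $\bigcup_iE_i$ it is $s^{-1}(\Pi\cap V)$ as above, and on each $E_i$ the differentials at $p_i$ of the $(1,1)$-forms through all $n+2$ points span an $n$-dimensional (not $(n+1)$-dimensional) subspace of $H^{0}(E_i,\O(1))$ — no such form vanishes doubly at $p_i$ and simply at the other $n+1$ points, by a dimension count — whose common zero is the single point $\widetilde B\cap E_i$. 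Since $N|_{E_i}=\O_{E_i}(1)$ is ample, no curve inside an $E_i$ meets $N$ negatively; hence $\widetilde B$ is the only irreducible curve $C$ with $N\cdot C<0$, while every other irreducible $C$ (not inside some $E_i$) has $N\cdot C\ge0$, i.e. $[C]\in\langle h_1-e_i,h_2-e_i,e_i\rangle$. This gives $\NE(X^{1,n}_{n+2})=\langle h_1-e_i,h_2-e_i,e_i,R\rangle$.

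\textbf{The case $r=n+3$ with $n\le4$, and the main obstacle.} For each $(n+2)$-subset $I\subseteq\{1,\dots,n+3\}$, applying the construction of the previous step to $\{p_i\}_{i\in I}$ gives an irreducible bidegree‑$(1,n)$ curve $B_I$ through those points; it avoids the remaining (general) point, so $[\widetilde B_I]=R_I:=h_1+nh_2-\sum_{i\in I}e_i$ is effective, which is $\supseteq$. For $\subseteq$, I would first observe that the same base‑locus computation shows, for every $S$ with $|S|\le n+2$, that $|H_1+H_2-\sum_{i\in S}E_i|$ has base locus inside $\bigcup_iE_i\cup\widetilde B_S$ (with $\widetilde B_S$ occurring only when $|S|=n+2$); hence, if $C$ is an irreducible curve not contained in any $E_i$ and not equal to any $\widetilde B_S$, then $\sum_{i\in S}m_i\le d_1+d_2$ for all $|S|\le n+2$. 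These inequalities cut out a cone strictly larger than the target one, and the proof would be completed by showing that the remaining facet‑supporting divisors of the target cone — those orthogonal to the rays $R_I$, such as $nH_1+(n+1)H_2-n(E_1+\cdots+E_{n+3})$ and its partial analogues — are nef. I expect this to be the hard part, and the source of the hypothesis $n\le4$. For $n=1$, $X^{1,1}_4$ is the del Pezzo surface of degree $4$ and the sixteen listed classes are exactly its $(-1)$-curves, so the statement is classical; for $n=2,3,4$ one would argue case by case, combining further base‑locus analyses (e.g. of $|H_1+nH_2-\sum_{i\in S}E_i|$) with a bound on the degree of an irreducible curve passing through all $n+3$ general points with prescribed multiplicities — equivalently on curves lying on the degree‑$(n+1)$ surface $s^{-1}\bigl(\langle s(p_1),\dots,s(p_{n+3})\rangle\cap V\bigr)$ swept out by the $B_S$. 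That degree bound, together with the verification that no new extremal rays arise, seems to be the only genuinely nontrivial step, and the one without an obvious uniform treatment for $n\ge5$.
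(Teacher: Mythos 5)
For the range $r\le n+1$ your argument is correct but genuinely different from the paper's. You prove that $N=H_1+H_2-E_1-\cdots-E_r$ is base point free (identifying its base locus off the exceptional divisors with $s^{-1}(\langle s(p_1),\dots,s(p_r)\rangle\cap V)$, which is just the blown-up points, and checking surjectivity of restriction to each $E_i$ by a conditions count), and then conclude by the linear-programming observation that $d_1,d_2,m_i\ge 0$ together with $\sum_i m_i\le d_1+d_2$ forces membership in $\langle h_1-e_i,h_2-e_i,e_i\rangle$. The paper instead argues by induction on $n$: it reduces to $d_1\ge m_i$, projects to $\P^n$, and either decomposes the class explicitly or traps the curve in the strict transform of $\P^1\times\Pi_{1,\dots,n}\cong X^{1,n-1}_n$. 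Your route is more uniform and has the bonus of exhibiting $H_1+H_2-\sum E_i$ as semiample, which the paper only obtains a posteriori; the paper's induction avoids the general-position verifications (transversality of the span of $n+1$ general points with $V$, independence of a double point plus simple points on $(1,1)$-forms) that your argument must supply. For $r=n+2$ your base-locus argument is essentially the paper's Bezout argument in different packaging: both isolate the rational normal curve $\Pi\cap V$ of class $h_1+nh_2-e_1-\cdots-e_{n+2}$ as the unique irreducible curve on which $H_1+H_2-\sum E_i$ is negative.

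For $r=n+3$, $n\le 4$, however, there is a genuine gap: you correctly reduce to curves lying on the degree-$(n+1)$ surface $S=\langle s(p_1),\dots,s(p_{n+3})\rangle\cap V$, but then explicitly defer the decisive step ("one would argue case by case\dots seems to be the only genuinely nontrivial step"), so the third assertion of the theorem is not proved. The missing idea is the following. By the analogue of your base-locus/Bezout step, any irreducible curve with $\sum_i m_i>d_1+d_2$ lies on the blow-up $\widetilde S$ of $S$ at $p_1,\dots,p_{n+3}$, and one shows (the paper's Proposition \ref{nfg_n+3}, using that $h_1-e_i$ and $h_2-e_j$ are contracted by $\widetilde\pi_2$ and $\widetilde\pi_1$ and hence extremal) that $\NE(X^{1,n}_{n+3})=\langle \NE(\widetilde S),\,h_1-e_i\rangle$. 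The point is then that $S$ is a rational normal scroll of degree $n+1$, isomorphic to $\P^1\times\P^1$ or to $\Bl_1\P^2$, so $\widetilde S\cong \Bl_{n+4}\P^2$ is a del Pezzo surface of degree $5-n$ precisely when $n\le 4$; its Mori cone is generated by the classical finite list of classes, and one finishes by writing the image of each such class under the explicit map $N_1(\Bl_{n+4}\P^2)\to N_1(X^{1,n}_{n+3})$ as a nonnegative combination of the asserted generators (the long tables in the paper's proof). This is also exactly where $n\le 4$ enters: for $n\ge 5$ the surface $\Bl_{n+4}\P^2$ is no longer del Pezzo and its cone of curves has infinitely many extremal rays. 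Your alternative suggestion of proving nefness of the facet-supporting divisors such as $nH_1+(n+1)H_2-n\sum E_i$ is not carried out and would itself require controlling curves on $\widetilde S$, so it does not close the gap.
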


Recall that a normal and $\mathbb{Q}$-factorial projective variety $X$ is \textit{log Fano} if there exists an effective divisor $D\subset X$ such that $-(K_X+D)$ is ample and the pair $(X,D)$ is Kawamata log terminal. By \cite[Corollary 1.3.2]{BCHM10} log Fano varieties are Mori dream spaces. The converse does not hold
in general, and there are several criteria for a Mori dream space to be log Fano \cite{GOST15}.

Our main results in the case $r\leq n+1$ in Proposition \ref{logf} and Theorem \ref{Cox_n+1} can be summarized as follows:
\begin{thm}
For $r\leq n+1$ the variety $X_{r}^{1,n}$ is log Fano and hence a Mori dream space. Furthermore, the Cox ring of $X_{n+1}^{1,n}$ is given by
$$
\Cox(X_{n+1}^{1,n}) \cong \frac{\mathbb{C}[S_0,\dots,S_n,T_{1,1},T_{1,2},\dots,T_{n+1,1},T_{n+1,2}]}{\left\langle g_1,\dots,g_{n-1}\right\rangle}
$$
where $S_i$ is the section associated to $H_2-E_1-\dots-E_{i-1}-E_{i+1}-\dots -E_{n+1}$, $T_{i,1}$ is the section associated to $H_1-E_i$ and $T_{i,2}$ is the section associated to $E_i$, and 
$$
g_i = (\beta_k\alpha_j-\beta_j\alpha_k)T_{i,1}T_{i,2} + (\beta_i\alpha_k-\beta_k\alpha_i)T_{j,1}T_{j,2} + (\beta_j\alpha_i-\beta_i\alpha_j)T_{k,1}T_{k,2}
$$
where for $1\leq i\leq n-1$ we set $k = j+1 = i+2$. 
\end{thm}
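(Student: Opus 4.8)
The plan is to treat the two assertions in turn, relying throughout on the description $\NE(X^{1,n}_r)=\langle h_1-e_i,\,h_2-e_i,\,e_i\rangle$ for $r\le n+1$ established above; by Kleiman's criterion this says that $aH_1+bH_2-\sum c_iE_i$ is ample exactly when $0<c_i<\min\{a,b\}$ for all $i$.

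\smallskip
\noindent\textit{Log Fano.} One has $K_{X^{1,n}_r}=-2H_1-(n+1)H_2+n\sum_{i=1}^rE_i$. I would take the boundary $D=\varepsilon\sum_{i=1}^{n+1}\widetilde H_i$ with $\varepsilon\in(\max\{0,1-\tfrac2n\},1)$, where $\widetilde H_i$ is the strict transform in $X^{1,n}_r$ of $\pi_2^{-1}(H_i)=\P^1\times H_i$ and $H_i\subset\P^n$ is the hyperplane spanned by $\pi_2(p_l)$ for $l\neq i$ (one adds $n+1-r$ auxiliary general points of $\P^n$ when $r<n+1$). Then $\widetilde H_i$ has class $H_2-\sum_{l\le r,\,l\ne i}E_l$, so $D$ has class $\varepsilon\big((n+1)H_2-n\sum_{l=1}^rE_l\big)$ and $-(K_{X^{1,n}_r}+D)=2H_1+(n+1)(1-\varepsilon)H_2-n(1-\varepsilon)\sum_lE_l$, which the ampleness criterion shows is ample precisely for $\varepsilon$ in the stated interval. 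Normalising the points $\pi_2(p_l)$ to be coordinate points of $\P^n$, the arrangement $\bigcup_i\pi_2^{-1}(H_i)$ becomes the pull-back of the toric boundary of $\P^n$, hence has simple normal crossings, and this is preserved by blowing up coordinate points; since every coefficient of $D$ equals $\varepsilon<1$, the pair $(X^{1,n}_r,D)$ is klt. Thus $X^{1,n}_r$ is log Fano, and a Mori dream space by \cite[Corollary~1.3.2]{BCHM10}.

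\smallskip
\noindent\textit{Generation of $\Cox(X)$, $X:=X^{1,n}_{n+1}$.} Normalise coordinates $x_0,x_1$ on $\P^1$ and $y_1,\dots,y_{n+1}$ on $\P^n$ so that $\pi_2(p_i)$ is the $i$-th coordinate point (hence $S_i$ corresponds to the linear form $y_i$, being the unique hyperplane through the other $q_l$'s) and $\pi_1(p_i)=\{\ell_i=0\}$ with $\ell_i=\beta_ix_0-\alpha_ix_1$ pairwise non-proportional (hence $T_{i,1}$ corresponds to $\ell_i$, and $T_{i,1}T_{i,2}$ equals $\ell_i$ in $H^0(X,H_1)\cong\C[x_0,x_1]_1$). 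By the standard description of the Cox ring of a blow-up of a Mori dream space at points (see, e.g., \cite{GPP23}), for $c_i\ge0$
\[
\Cox(X)_{\,aH_1+bH_2-\sum c_iE_i}\;=\;\{F\in\C[x_0,x_1]_a\otimes\C[y_1,\dots,y_{n+1}]_b:\ \mult_{p_i}F\ge c_i\ \ \forall i\},
\]
with multiplication of forms as product, and classes having some positive $E_i$-coefficient are obtained from these by multiplying by powers of $T_{i,2}$; so it suffices to show the displayed space is spanned by the forms $\prod_i\ell_i^{q_i}\prod_jy_j^{m_j}$ with $q_i,m_j\ge0$, $\sum q_i=a$, $\sum m_j=b$ and $q_l\ge m_l-(b-c_l)$ for all $l$ (these are exactly the images of the degree‑$(aH_1+bH_2-\sum c_iE_i)$ monomials in $S_\bullet,T_{\bullet,\bullet}$). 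The key point is a decoupling over the $\P^n$-monomials: writing $F=\sum_{|M|=b}P_M(x_0,x_1)\,y^M$ and computing $\mult_{p_l}F$ in local coordinates at $p_l$ of which $\ell_l$ is one, the requirement ``$\mult_{p_l}F\ge c_l$ for every $l$'' is equivalent — using that the $\ell_l$ are pairwise coprime — to ``$\prod_l\ell_l^{\nu_{l,M}}$ divides $P_M$ for every $M$'', where $\nu_{l,M}=\max\{0,M_l-(b-c_l)\}$. Hence the displayed space equals $\bigoplus_M\big(\prod_l\ell_l^{\nu_{l,M}}\big)\cdot\C[x_0,x_1]_{\,a-\sum_l\nu_{l,M}}\cdot y^M$; and since any two $\ell_l$ are independent, $\{\ell_1^{d_1}\ell_2^{d_2}:d_1+d_2=d\}$ spans $\C[x_0,x_1]_d$, so each summand is spanned by the asserted products. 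This proves that $S_\bullet,T_{\bullet,\bullet}$ generate $\Cox(X)$ (and in particular that $X$ is a Mori dream space).

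\smallskip
\noindent\textit{Relations.} Each $T_{i,1}T_{i,2}$ has class $H_1$ and equals $\ell_i$ in the $2$-dimensional space $H^0(X,H_1)$, so any three of $\ell_1,\dots,\ell_{n+1}$ are linearly dependent; $g_m$ is (up to sign) the dependence relation among $\ell_m,\ell_{m+1},\ell_{m+2}$, with coefficients the $2\times2$ minors of $\left(\begin{smallmatrix}\alpha_m&\alpha_{m+1}&\alpha_{m+2}\\\beta_m&\beta_{m+1}&\beta_{m+2}\end{smallmatrix}\right)$, all nonzero as the $p_i$ are general. Thus there is a surjection $\mathcal R:=P/(g_1,\dots,g_{n-1})\twoheadrightarrow\Cox(X)$, where $P$ is the polynomial ring on the $3(n+1)$ variables $S_i,T_{i,1},T_{i,2}$; it remains to see this is injective. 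Since $(g_1,\dots,g_{n-1})$ involves only the products $w_i:=T_{i,1}T_{i,2}$, we have $\Spec\mathcal R=\mathbb A^{n+1}\times\mu^{-1}(L)$ with $\mu:\mathbb A^{2(n+1)}\to\mathbb A^{n+1}$, $(T_{i,1},T_{i,2})_i\mapsto(T_{i,1}T_{i,2})_i$, and $L\subset\mathbb A^{n+1}$ the $2$-plane cut out by the linear forms $g_m$. The banded ``three‑term'' shape of the $g_m$ together with the generality of the $p_i$ gives (by a short linear‑algebra check) that $L$ is in general position with respect to the coordinate hyperplanes; in particular $L$ meets the torus, so the open subset of $\mu^{-1}(L)$ where all $T_{i,1}\ne0$ is a rank‑$2$ vector bundle over $(\C^{\ast})^{n+1}$, hence irreducible of dimension $n+3$, and since every component of the complete intersection $\mu^{-1}(L)$ has dimension $\ge n+3$ while $\mu^{-1}(L)\cap\{T_{i,1}=0\}$ and $\mu^{-1}(L)\cap\{T_{i,2}=0\}$ have dimension $\le n+2$, that subset is dense; as $\mu^{-1}(L)$ is also smooth at a torus point it is reduced. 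Therefore $\mathcal R$ is a domain of Krull dimension $(n+1)+(n+3)=2n+4=\dim X+\rk\Pic(X)=\dim\Cox(X)$, and a surjection of finitely generated $\C$-domains of equal dimension is an isomorphism.

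\smallskip
The main obstacle is the generation step: spotting that the multiplicity conditions at the $n+1$ points decouple over the monomials in $y$ — which uses that the points are placed at coordinate points of $\P^n$ and that there are exactly $n+1$ of them, equal to the number of $y$-variables — after which all that remains is the triviality that powers of two independent binary forms span. The only other delicate point is the genericity input needed to know that $\mathcal R$ is a domain.
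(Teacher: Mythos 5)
Your proof is correct, and the two halves of it compare differently with the paper's. For the log Fano assertion you use essentially the paper's argument: the same boundary class $\varepsilon\bigl((n+1)H_2-n\sum_i E_i\bigr)$, the same ampleness check against the Mori cone of Proposition \ref{Mori_n+1}, and a simple-normal-crossings verification of kltness (you normalize the projected points to the coordinate simplex and track SNC through the blow-up, where the paper counts transversal intersections along strict transforms of linear spans; you also handle $r<n+1$ directly with auxiliary points where the paper cites \cite{PS09}). The Cox ring half, by contrast, is a genuinely different route. The paper sets up the complexity-one torus action, identifies the invariant divisors $\widetilde H_i$, $D_{i,1}$, $D_{i,2}$, and then reads off generators and the trinomial relations in one stroke from the Hausen--S\"u{\ss} structure theorem (Theorem \ref{HS}); you instead prove generation by hand, via the observation that after placing the $\pi_2(p_i)$ at the coordinate points the condition $\mult_{p_l}F\ge c_l$ decouples over the monomials $y^M$ into divisibility conditions $\ell_l^{\nu_{l,M}}\mid P_M$, which glue by pairwise coprimality of the $\ell_l$ and are then realized by monomials in the $S_i,T_{i,j}$ because powers of two independent binary linear forms span $\mathbb{C}[x_0,x_1]_d$; and you prove that the kernel of the resulting surjection is exactly $(g_1,\dots,g_{n-1})$ by an irreducibility-and-dimension count on $V(g_\bullet)$. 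I checked the decoupling computation and the dimension estimates (every component of the complete intersection $\mu^{-1}(L)$ has dimension at least $n+3$ while the loci $T_{i,\epsilon}=0$ contribute at most $n+2$), and they hold. Your route buys self-containedness and an explicit description of every graded piece; the paper's buys brevity and the a priori knowledge that the relations must be trinomial. Two points you leave implicit and should record: the canonical sections must be normalized so that $T_{i,1}T_{i,2}$ equals $\ell_i$ on the nose (otherwise the coefficients of the $g_i$ are rescaled), and reducedness of $V(g_\bullet)$ from smoothness at one torus point uses that a complete intersection is Cohen--Macaulay, so generic reducedness of the unique component suffices.
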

Furthermore, in Section \ref{sectionmori} we explicitly describe the nef cones of $X^{1,n}_r$ for $r\leq n+2$, and of $X^{1,n}_{n+3}$ for $n\leq 4$, and in Section \ref{SCox_n+1} we give generators for the cones of movable divisors and of moving curves of $X^{1,n}_{n+1}$.

\subsection*{Acknowledgments}
The first named author is supported by and member of the ANR project \it FanoHK \rm ANR-20-CE40-0023. The first named author is member of the GDR GAGC. The first and second named authors are members of the Gruppo Nazionale per le Strutture Algebriche, Geometriche e le loro Applicazioni of the Istituto Nazionale di Alta Matematica "F. Severi" (GNSAGA-INDAM). The mobility of the third named author is suported by a UFI Scholarship VINCI. We thank Antonio Laface and Elisa Postinghel for their helpful comments on a preliminary version of the paper.

\section{Cox rings, Mori dream spaces and log Fano varieties}
Let $X$ be a normal projective $\mathbb{Q}$-factorial variety. We denote by $N^1(X)$ the real vector space of $\mathbb{R}$-Cartier divisors modulo numerical equivalence. 
The \emph{nef cone} of $X$ is the closed convex cone $\Nef(X)\subset N^1(X)$ generated by classes of nef divisors. 

The stable base locus $\textbf{B}(D)$ of a $\mathbb{Q}$-divisor $D$ is the set-theoretic intersection of the base loci of the complete linear systems $|sD|$ for all positive integers $s$ such that $sD$ is integral
\stepcounter{thm}
\begin{equation}\label{sbl}
\textbf{B}(D) = \bigcap_{s > 0}B(sD).
\end{equation}
The \emph{movable cone} of $X$ is the convex cone $\Mov(X)\subset N^1(X)$ generated by classes of 
\emph{movable divisors}. These are Cartier divisors whose stable base locus has codimension at least two in $X$.
The \emph{effective cone} of $X$ is the convex cone $\Eff(X)\subset N^1(X)$ generated by classes of 
\emph{effective divisors}. We have inclusions $\Nef(X)\ \subset \ \overline{\Mov(X)}\ \subset \ \overline{\Eff(X)}$. We refer to \cite[Chapter 1]{De01} for a comprehensive treatment of these topics.

We say that a birational map  $f: X \dasharrow X'$ to a normal projective variety $X'$  is a \emph{birational contraction} if its inverse does not contract any divisor. 
We say that it is a \emph{small $\mathbb{Q}$-factorial modification} 
if $X'$ is $\mathbb{Q}$-factorial and $f$ is an isomorphism in codimension one.
If $f: X \dasharrow X'$ is a small $\mathbb{Q}$-factorial modification then 
the natural pullback map $f^*:N^1(X')\to N^1(X)$ sends $\Mov(X')$ and $\Eff(X')$
isomorphically onto $\Mov(X)$ and $\Eff(X)$ respectively. In particular, we have $f^*(\Nef(X'))\subset \overline{\Mov(X)}$.

Now, assume that the divisor class group $\Cl(X)$ is free and finitely generated, and fix a subgroup $G$ of the group of Weil divisors on $X$ such that the canonical map $G\rightarrow\Cl(X)$, mapping a divisor $D\in G$ to its class $[D]$, is an isomorphism. The \textit{Cox ring} of $X$ is defined as
$$\Cox(X) = \bigoplus_{[D]\in \Cl(X)}H^0(X,\mathcal{O}_X(D))$$
where $D\in G$ represents $[D]\in\Cl(X)$, and the multiplication in $\Cox(X)$ is defined by the standard multiplication of homogeneous sections in the field of rational functions on $X$. 

\begin{Definition}\label{def:MDS} 
A normal projective $\mathbb{Q}$-factorial variety $X$ is called a \emph{Mori dream space}
if the following conditions hold:
\begin{enumerate}
\item[-] $\Pic{(X)}$ is finitely generated, or equivalently $h^1(X,\mathcal{O}_X)=0$,
\item[-] $\Nef{(X)}$ is generated by the classes of finitely many semi-ample divisors,
\item[-] there is a finite collection of small $\mathbb{Q}$-factorial modifications
 $f_i: X \dasharrow X_i$, such that each $X_i$ satisfies the second condition above, and $
 \Mov{(X)} \ = \ \bigcup_i \  f_i^*(\Nef{(X_i)})$.
\end{enumerate}
\end{Definition}

The collection of all faces of all cones $f_i^*(\Nef{(X_i)})$ above forms a fan which is supported on $\Mov(X)$.
If two maximal cones of this fan, say $f_i^*(\Nef{(X_i)})$ and $f_j^*(\Nef{(X_j)})$, meet along a facet,
then there exist a normal projective variety $Y$, a small modification $\varphi:X_i\dasharrow X_j$, and $h_i:X_i\rightarrow Y$, $h_j:X_j\rightarrow Y$ small birational morphisms of relative Picard number one such that $h_j\circ\varphi = h_i$. The fan structure on $\Mov(X)$ can be extended to a fan supported on $\Eff(X)$ as follows. 

\begin{Definition}\label{MCD}
Let $X$ be a Mori dream space.
We describe a fan structure on the effective cone $\Eff(X)$, called the \emph{Mori chamber decomposition}. There are finitely many birational contractions from $X$ to Mori dream spaces, denoted by $g_i:X\dasharrow Y_i$.
The set $\Exc(g_i)$ of exceptional prime divisors of $g_i$ has cardinality $\rho(X/Y_i)=\rho(X)-\rho(Y_i)$.
The maximal cones $\mathcal{C}$ of the Mori chamber decomposition of $\Eff(X)$ are of the form $\mathcal{C}_i \ = \left\langle g_i^*\big(\Nef(Y_i)\big) , \Exc(g_i) \right\rangle$. We call $\mathcal{C}_i$ or its interior $\mathcal{C}_i^{^\circ}$ a \emph{maximal chamber} of $\Eff(X)$. We refer to \cite[Proposition 1.11]{HK00} and \cite[Section 2.2]{Ok16} for details.
\end{Definition}

\begin{Definition}\label{logF}
A normal and $\mathbb{Q}$-factorial projective variety $X$ is \textit{log Fano} if there exists an effective divisor $D\subset X$ such that $-(K_X+D)$ is ample and the pair $(X,D)$ is Kawamata log terminal.
\end{Definition}

By \cite[Corollary 1.3.2]{BCHM10} if $X$ is log Fano then it is a Mori dream space.
  
\subsection{Varieties with a torus action}
Let $X$ be a normal projective variety and $T\times X\rightarrow X$ an effective algebraic torus action on $X$ of complexity one that is such that its biggest $T$-orbits are of codimension one in $X$. 

We will follow the treatment in \cite{HS10}. For a given point $x\in X$, denote by $T_{x}\subset T$ its isotropy group
$$T_{x} =\{t\in T \ | \ t\cdot x=x\}$$
and consider the non-empty $T$-invariant open subset
$$X_{0} =\{x\in X\ | \ \dim(T_{x})=0\}\subset X$$ 
of points in $x$ with zero-dimensional isotropy group. There is a geometric quotient 
$$q:X_{0}\rightarrow X_{0}/T$$ 
where the orbit space $X_{0}/T$ is of dimension one and has a separation that is a rational map $\pi:X_{0}/T\dasharrow \mathbb{P}^{1}$ which is a local isomorphism in codimension one. 

Denote by $E_{1},\dots,E_{m}\subseteq X$ the $T$-invariant prime divisors supported
in $X\setminus X_{0}$ and by $D_{1},\dots,D_{n}\subseteq X$ those $T$-invariant prime divisors who have a finite generic isotropy group of order $l_{j}>1$. Moreover, let $1_{{E}_{k}}$ and $1_{{D}_{j}}$ denote the canonical sections of the divisors $E_{k}$ and $D_{j}$ respectively, and let $1_{q(D_{j})}$ be the canonical section of $q(D_{j})$. 

Choose $r\geq1$ and $a_{0},\dots,a_{r}\in\mathbb{P}^{1}$, such that $\pi$ is an isomorphism
over $\mathbb{P}^{1}\setminus\{a_{0},\dots,a_{r}\}$ and all the divisors $D_{j}$ occur among the 
$$D_{i,j} = q^{-1}(y_{i,j})$$
where
$\pi^{-1}(a_{i}) = \{y_{i,1},\dots,y_{i,{n_i}}\}.$ Let $l_{i,j}\in\mathbb{Z}_{\geq 1}$ denote the order of the generic isotropy group of $D_{i,j}$. \\
For every $0\leq i\leq r$ define a monomial
$$f_{i} = T^{l_{i,1}}_{i,1}\cdots T^{l_{i,n_{i}}}_{i,n_{i}}\in\mathbb{C}[T_{i,j}; 0\leq i\leq r, 1\leq j\leq n_{i}].$$
Moreover, write $a_{i}=[b_{i},c_{i}]$ with $b_{i},c_{i}\in\mathbb{C}$ and for every $0\leq i\leq r-2$ set $k=j+1=i+2$ and define a trinomial
$$g_{i} = (c_{k}b_{j}-c_{j}b_{k})f_{i}+(c_{i}b_{k}-c_{k}b_{i})f_{j}+(c_{j}b_{i}-c_{i}b_{j})f_{k}.$$
In the previous notation we have the following:
\begin{thm}\cite[Theorem 1.3]{HS10}\label{HS} Let $T\times X\rightarrow X$ be an algebraic torus action of complexity one. Then, in terms of the data defined above, the Cox ring of $X$ is given as
	$$\Cox(X)=\mathbb{C}[S_{1},\dots,S_{m},T_{i,j};\: 0\leq i\leq r, 1\leq j\leq n_{i}] /\langle g_{i}; 0\leq i\leq r-2\rangle$$
	where $1_{{E}_{k}}$ corresponds to $S_{k}$ and $1_{{D}_{i,j}}$ to $T_{i,j}$, and the $\text{Cl}(X)$-grading on the right hand side is defined by associating to $S_{k}$ the class of $E_{k}$ and to $T_{i,j}$ the class of $D_{i,j}$. In particular, $\Cox(X)$ is finitely generated.
\end{thm}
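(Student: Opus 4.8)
The plan is to reduce the computation to the quotient curve and then to recognise the resulting ring as a trinomial hypersurface ring, following the strategy of \cite{HS10}. I work throughout with $\Cox(X)=\bigoplus_{[D]\in\Cl(X)}H^{0}(X,\O_{X}(D))$, the geometric quotient $q\colon X_{0}\to X_{0}/T$, and the separation $\pi\colon X_{0}/T\dasharrow\P^{1}$, which is an isomorphism in codimension one. First I would pin down the generators. One checks that $\Cl(X)$ is generated by the classes $[E_{k}]$ of the $T$-invariant prime divisors outside $X_{0}$ together with the classes $[D_{i,j}]$: any other $T$-invariant prime divisor is the closure of an orbit $q^{-1}(y)$ for some $y\in X_{0}/T$ with $\pi(y)\notin\{a_{0},\dots,a_{r}\}$, all such are mutually linearly equivalent, and the open curve $\P^{1}\setminus\{a_{0},\dots,a_{r}\}$ has trivial divisor class group. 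Dually, the canonical sections $1_{E_{k}}$ and $1_{D_{i,j}}$ generate $\Cox(X)$: a $\Cl(X)$-homogeneous section $s\in H^{0}(X,\O_{X}(D))$ is a rational function on $X$ whose \emph{horizontal part} pushes down, via $q$ and $\pi$, to a section of some $\O_{\P^{1}}(d)$, hence to a homogeneous polynomial of degree $d$ in the two coordinates of $\P^{1}$, hence to a $\C$-linear combination of products of the linear forms $\ell_{i}$ vanishing at the $a_{i}$; since $\pi^{*}q^{*}\ell_{i}$ has divisor $\sum_{j}l_{i,j}D_{i,j}$, this is a $\C$-combination of monomials in $f_{i}=\prod_{j}T_{i,j}^{l_{i,j}}$, while the \emph{vertical} part of $D$ along the $E_{k}$ and $D_{i,j}$ is absorbed by the $S_{k}$ and $T_{i,j}$, anything happening in codimension $\geq 2$ on the curve being invisible because $\pi$ is an isomorphism in codimension one. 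This yields a $\Cl(X)$-graded surjection $\Phi\colon\C[S_{k},T_{i,j}]\twoheadrightarrow\Cox(X)$ with $S_{k}\mapsto 1_{E_{k}}$ and $T_{i,j}\mapsto 1_{D_{i,j}}$, and in particular the finite generation of $\Cox(X)$.

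Next I would produce the relations. On $\P^{1}$ the space $H^{0}(\O_{\P^{1}}(1))$ is two-dimensional, so any three of the forms $\ell_{0},\dots,\ell_{r}$ are linearly dependent; for a consecutive triple $k=j+1=i+2$, Cramer's rule gives exactly $(c_{k}b_{j}-c_{j}b_{k})\ell_{i}+(c_{i}b_{k}-c_{k}b_{i})\ell_{j}+(c_{j}b_{i}-c_{i}b_{j})\ell_{k}=0$, and the $r-1$ relations obtained in this way for $0\leq i\leq r-2$ span all linear relations among the $\ell_{i}$. Applying $\pi^{*}q^{*}$ and replacing each $\ell_{i}$ by the monomial $f_{i}$ turns these into the trinomials $g_{0},\dots,g_{r-2}$, so $\langle g_{0},\dots,g_{r-2}\rangle\subseteq\Ker\Phi$.

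Finally I would prove the reverse inclusion by a dimension-and-normality argument. Set $R=\C[S_{k},T_{i,j}]/\langle g_{0},\dots,g_{r-2}\rangle$. The trinomials form a regular sequence, since $g_{i}$ involves the variables of $f_{i+2}$, which do not occur in $g_{0},\dots,g_{i-1}$; hence $R$ is a complete intersection of dimension $(m+\sum_{i}n_{i})-(r-1)$, which one checks equals $\dim X+\rank\Cl(X)=\dim\Cox(X)$. An inspection of the Jacobian of the $g_{i}$ shows that the singular locus of $\Spec R$ has codimension at least two, so $R$ satisfies Serre's conditions $R_{1}$ and $S_{2}$ and is therefore normal, and a direct factorisation argument shows that $\langle g_{0},\dots,g_{r-2}\rangle$ is prime, so $R$ is an integral domain. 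Then $\Phi$ induces a surjection $R\twoheadrightarrow\Cox(X)$ of integral domains of equal Krull dimension, hence an isomorphism; a final check identifies the induced grading as the asserted one, with $S_{k}$ of degree $[E_{k}]$ and $T_{i,j}$ of degree $[D_{i,j}]$.

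The main obstacle is the technical heart of the first and third steps. The surjectivity of $\Phi$ must be established uniformly over all divisor classes, which requires careful bookkeeping with the finite generic isotropy orders $l_{i,j}$ and with the passage between $X_{0}/T$ and $\P^{1}$; and the assertion that the trinomial ring $R$ is a normal integral domain of the expected dimension, although routine in spirit, is where the real work lies. Once these are in place, matching the two presentations and deducing finite generation are formal.
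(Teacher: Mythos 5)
First, a point of comparison: the paper itself does not prove this statement. Theorem \ref{HS} is quoted verbatim from \cite{HS10}, so the only proof to measure your proposal against is the one in that reference, which proceeds through a general structure theorem for Cox rings of $T$-varieties (identifying $\Cox(X)$ in terms of the Cox ring of the separation $\P^1$ and the canonical sections of the invariant divisors), not through the generators-relations-plus-dimension-count scheme you outline. Your scheme is a legitimate alternative strategy in principle, and several of its ingredients are sound: the Cramer-rule derivation of the trinomials $g_i$, the observation that consecutive triples span all linear relations among $\ell_0,\dots,\ell_r$, the regular-sequence argument (each $g_i$ introduces the fresh variables of $f_{i+2}$ with nonzero coefficient), and the final step that a surjection of affine domains of equal Krull dimension is an isomorphism.

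However, as written the proposal has two genuine gaps, and they sit exactly where you admit ``the real work lies'', so what you have is a plan rather than a proof. (a) Surjectivity of $\Phi$: the claim that any $\Cl(X)$-homogeneous section splits into a ``horizontal part'' that pushes down to a polynomial in the $\ell_i$ and a ``vertical part'' absorbed by the $1_{E_k}$ and $1_{D_{i,j}}$ is essentially the content of the theorem and cannot be dispatched in a sentence: one must reduce to $T$-invariant divisors, decompose $H^0(X,\O_X(D))$ into $T$-eigenspaces, identify each eigenspace with sections of an explicit divisor on the quotient curve while keeping track of the rounding effects of the isotropy orders $l_{i,j}$, of the non-separated fibres over the $a_i$, and of the contracted divisors $E_k$; only after this does the reduction to $H^0(\P^1,\O_{\P^1}(d))$ apply. (b) The reverse inclusion: you need (i) that $R=\C[S_k,T_{i,j}]/\langle g_0,\dots,g_{r-2}\rangle$ is a domain, and (ii) that its dimension $(m+\sum_i n_i)-(r-1)$ equals $\dim X+\rank\Cl(X)$. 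For (i), there is no quick ``direct factorisation argument'': irreducibility and normality of these trinomial complete intersections is a theorem in its own right (complete intersection gives $S_2$, a Jacobian estimate gives $R_1$, and one still needs connectedness via a suitable positive grading to conclude primeness); this is precisely the Hausen--Herppich-type analysis. For (ii), the identity $\rank\Cl(X)=m+\sum_i n_i-r-\dim T$ presupposes that the invariant classes $[E_k],[D_{i,j}]$ generate $\Cl(X)$ with relations exactly those coming from characters of $T$ and from rational functions on $\P^1$; this is again part of the divisor theory of complexity-one $T$-varieties that your step (a) is supposed to establish, so the argument as sketched runs a real risk of circularity unless these facts are proved independently beforehand.
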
 
Theorem \ref{HS} will be crucial in the study of the Cox ring of $X^{1,n}_{n+1}$.

\section{Mori cones}\label{sectionmori}
In this section we compute the cone of effective curves of $X^{1,n}_r$ for $r\leq n+2$, and for $r = n+3$ when $n\leq 4$.

\begin{Proposition}\label{Mori_n+1}
The Mori cone of $X^{1,n}_r$ is given by
$$
\NE(X^{1,n}_r)=\langle h_{1}-e_{i},h_{2}-e_{i},e_{i}\rangle
$$
for all $r\leq n+1$.
\end{Proposition}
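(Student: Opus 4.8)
To compute $\NE(X^{1,n}_r)$ for $r \le n+1$ I would first observe that the stated classes $h_1-e_i$, $h_2-e_i$, $e_i$ are all effective and extremal, so the content is that they span the whole Mori cone. The natural strategy is to produce, for every irreducible curve $C \subset X^{1,n}_r$, a decomposition of $[C]$ as a non-negative combination of these generators. Write $[C] = a h_1 + b h_2 - \sum_i m_i e_i$ with $m_i = C \cdot E_i \ge 0$ (for $C$ not contained in an exceptional divisor) and $a = C \cdot H_1 \ge 0$, $b = C \cdot H_2 \ge 0$, since $H_1, H_2$ are nef (being pullbacks of ample divisors from $\P^1$ and $\P^n$). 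If $C$ lies in some $E_i$ then $[C]$ is a multiple of $e_i$ and we are done, so assume otherwise.

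\textbf{Reduction via projections.} Pushing $C$ forward along $\widetilde\pi_2 : X^{1,n}_r \to \P^n$, its image $\pi_2(\pi(C))$ is a curve of degree $b$ in $\P^n$ (or a point if $b=0$). If $b = 0$ then $C$ is contracted by $\widetilde\pi_2$, hence lies in a fiber $\P^1$ of $\widetilde\pi_2$ or in an exceptional divisor; in the former case $[C] = a h_1 - \sum m_i e_i$ with $a \ge m_i$ (the strict transform of a fiber of $\pi_2$ through some of the $p_i$), giving $[C] = \sum_i m_i(h_1 - e_i) + (a - \sum m_i) h_1$, which works once $a \ge \sum m_i$; the latter is forced because the fiber is a $\P^1$ through at most... — here one must check that a line in $\P^1$ meets the blown-up points with total multiplicity at most $1$, i.e. the $p_i$ lie on distinct fibers of $\pi_1$ for general points, so in fact at most one $m_i$ is nonzero and it equals $1$ if $a \ge 1$; so $b=0$ forces $[C] \in \langle h_1 - e_i, h_1 \rangle = \langle h_1 - e_i \rangle$ up to the observation $h_1 = (h_1 - e_i) + e_i$. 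Symmetrically $a = 0$ forces $C$ into a fiber $\P^n$ of $\widetilde\pi_1$; there $[C] = b h_2 - \sum m_i e_i$ where the $m_i$ count intersections with the $p_i$ lying on that single fiber, and since $r \le n+1$ general points in $\P^n$ impose independent conditions, the class $b h_2 - \sum_{i \in S} m_i e_i$ of an irreducible curve in $\P^n$ passing through points $\{p_i\}_{i \in S}$ with $|S| \le n+1$ in general position decomposes as $\sum_{i} m_i (h_2 - e_i) + (b - \sum m_i) h_2$ provided $b \ge \sum_i m_i$; this last inequality for irreducible curves through general points is exactly where the hypothesis $r \le n+1$ enters, and it is the standard fact that the linear system $|bH_2 - \sum (m_i+1)e_i|$-type constraints are consistent, equivalently that through $r \le n+1$ general points a curve of degree $b < \sum m_i$ with those multiplicities cannot be irreducible (a secant/linear-algebra argument).

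\textbf{The general case.} When both $a > 0$ and $b > 0$, I would subtract off generators to reduce to the boundary cases: if some $m_i > 0$, compare with $h_1 - e_i$ and $h_2 - e_i$. Concretely, after reordering so $m_1 \ge m_2 \ge \cdots$, peel off $\min(a, m_1)$ copies of $h_1 - e_1$ or, if $a$ is exhausted first, switch to using $h_2 - e_i$. The bookkeeping amounts to showing that the inequalities $a \ge \#\{i : \text{used } h_1-e_i\}$ and $b \ge (\text{total } h_2\text{-degree consumed})$ can always be met, which follows once we know $a + b \cdot n \ge \sum_i m_i$ — note $h_1 + n h_2 - \sum e_i$ is the anticanonical-type class that becomes extremal only at $r = n+2$, so for $r \le n+1$ the inequality is strict for irreducible $C$ and the decomposition closes. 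I expect the main obstacle to be precisely establishing that an irreducible curve $C$ on $X^{1,n}_r$ satisfies $C \cdot (H_1 + nH_2) \ge \sum_i (C\cdot E_i)$ for $r \le n+1$: this is the numerical shadow of the fact that $-K$ minus the relevant boundary is still positive, and it should be proved either by a dimension count on the space of curves through $r$ general points (a Terracini/general-position argument bounding the multiplicities a degree-$(a,b)$ irreducible curve can have) or by invoking that $X^{1,n}_{n+1}$ is log Fano (Proposition \ref{logf}), whose anticanonical positivity directly bounds intersections with effective curves; once that inequality is in hand, the combinatorial decomposition into $\langle h_1 - e_i, h_2 - e_i, e_i\rangle$ is routine.
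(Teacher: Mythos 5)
Your plan reduces the proposition to a numerical inequality, but you have identified the wrong inequality, and the right one is never proved. Since the dual of the cone $\langle h_1-e_i,h_2-e_i,e_i\rangle$ is $\langle H_1,H_2,H_1+H_2-\sum_{i\in S}E_i\rangle$, a class $d_1h_1+d_2h_2-\sum_i m_ie_i$ with $d_1,d_2,m_i\geq 0$ lies in that cone if and only if $d_1+d_2\geq \sum_i m_i$: in your peeling procedure each unit of multiplicity must consume a unit of $d_1$ \emph{or} a unit of $d_2$, so the bookkeeping closes exactly when $d_1+d_2\geq\sum_i m_i$, not when $d_1+nd_2\geq\sum_i m_i$. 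The weaker inequality you propose to establish does not imply membership --- for instance $h_1+nh_2-e_1-\cdots-e_{n+2}$ satisfies it yet lies outside the cone (it is precisely the new extremal ray appearing at $r=n+2$) --- so the claim that ``once that inequality is in hand the decomposition is routine'' fails. Moreover, even your inequality is only asserted, not proved: ampleness of $-K-\Delta$ (the log Fano property) gives positivity against classes in the interior of the nef cone and cannot yield the boundary inequality $C\cdot(H_1+H_2-E_1-\cdots-E_r)\geq 0$, which is essentially equivalent to the proposition itself; likewise the justification ``$h_1+nh_2-\sum e_i$ becomes extremal only at $r=n+2$'' presupposes knowledge of the Mori cone and is circular. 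A smaller but real error: the ``standard fact'' you invoke in the $a=0$ case --- that an irreducible curve of degree $b$ in $\mathbb{P}^n$ through at most $n+1$ general points with multiplicities $m_i$ satisfies $b\geq\sum m_i$ --- is false (a line through two of the points, or a conic through three, violates it); that case is actually harmless only because general points lie on distinct fibers of $\pi_1$, so each fiber of $\widetilde{\pi}_1$ contains at most one $p_i$, a fact you use in the $b=0$ case but abandon here.

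What is missing is a genuine geometric argument for $d_1+d_2\geq\sum_i m_i$ when the curve is not already forced into one of the obvious subvarieties. The paper supplies it in two steps you do not have: first, if $C\cdot(H_1-E_i)=d_1-m_i<0$ then $C$ lies in the strict transform of the fiber of $\pi_1$ through $p_i$ (a $\mathbb{P}^n$ blown up at one point), so one may assume $d_1\geq m_i$ for all $i$; second, an induction on $n$ using the hyperplane $\Pi_{1,\dots,n}\subset\mathbb{P}^n$ spanned by $n$ of the projected points: if $m_1+\cdots+m_n>d_2$ then $\widetilde{\pi}_2(C)\subset\Pi_{1,\dots,n}$, hence $C$ lies in the strict transform of $\mathbb{P}^1\times\Pi_{1,\dots,n}\cong X^{1,n-1}_n$ and induction applies, while if $m_1+\cdots+m_n\leq d_2$ the explicit decomposition $C\sim\sum_{i=1}^n m_i(h_2-e_i)+m_{n+1}(h_1-e_{n+1})+(d_1-m_{n+1})h_1+(d_2-m_1-\cdots-m_n)h_2$ has non-negative coefficients. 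Without this containment-and-induction step (or some substitute proof of the correct inequality), your argument does not go through.
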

\begin{proof}
It is enough to prove the claim for $r = n+1$. The case $n = 1$ is well-known. We will assume that the claim is proved for $X^{1,n-1}_n$ and prove it for $X^{1,n}_{n+1}$. 

Let $C\subset X^{1,n}_{n+1}$ be an irreducible curve. If $C$ is contracted by $\pi$ then $C\subset E_i$ for some $i = 1,\dots, n+1$ and hence $C$ is a multiple of $e_i$. Assume that $\pi(C)$ is a curve of bidegree $(d_1,d_2)$ and such that $\mult_{p_i}C = m_i$. Then we may write 
$$
C\sim d_{1}h_{1}+d_{2}h_{2}-m_{1}e_{1}-\dots-m_{n}e_{n}-m_{n+1}e_{n+1}.
$$
Note that $C\cdot (H_1-E_i) = d_1-m_i$. If $d_1-m_i < 0$ then $C$ is contained in the strict transform of the fiber of $\pi_1$ passing through $p_i$. Such fiber is $\mathbb{P}^n$ blown-up in a point and hence $C$ may be written as a linear combination with non negative coefficients of $e_i$ and $h_2-e_i$. Hence, we may assume that $d_1-m_i \geq 0$ for all $i = 1,\dots,n+1$. 

Now, consider the projection $\widetilde{\pi}_2(C)$. If $\widetilde{\pi}_2(C)$ is a point then $C$ is a linear combination with non negative coefficients of $h_1-e_i$ and $e_i$ for some $i = 1,\dots,n+1$. Assume that $\widetilde{\pi}_2(C)$ is a curve. Then this curve has degree $d_2$ and $\mult_{\pi_2(p_i)}\widetilde{\pi}_2(C) = m_i$. Let $\Pi_{1,\dots,n}\subset\mathbb{P}^n$ be the hyperplane generated by $p_1,\dots,p_n$. If
$$
m_1 + \dots + m_n > d_2
$$
then $\widetilde{\pi}_2(C)\subset\Pi_{1,\dots,n}$ and hence $C$ is contained in the strict transform of $\mathbb{P}^1\times \Pi_{1,\dots,n}$ which is isomorphic to $X^{1,n-1}_n$. Therefore, we can conclude by induction on $n$. On the other hand, if 
$$
m_1 + \dots + m_n \leq d_2
$$
we may write
$$
C\sim \sum_{i=1}^n m_i(h_2-e_i) + m_{n+1}(h_{1}-e_{n+1}) + (d_{1}-m_{n+1})h_{1} + (d_{2}-m_{1}-\dots-m_{n})h_{2}
$$
where all the coefficients are non negative, concluding the proof.
\end{proof}

Consider the Segre embedding
$$
\begin{array}{cccc}
\sigma_{1,n}: & \mathbb{P}^1\times\mathbb{P}^n & \longrightarrow & \mathbb{P}^{2n+1} \\ 
 & ([u_0,u_1],[v_0,\dots,v_n]) & \mapsto & [u_0v_0,u_0v_1,\dots,u_1v_n]
\end{array} 
$$
and set $\Sigma^{1,n} = \sigma_{1,n}(\mathbb{P}^1\times\mathbb{P}^n)$. Recall that $\deg(\Sigma^{1,n}) = n+1$.

\begin{Proposition}\label{Mori_n+2}
The Mori cone of $X^{1,n}_{n+2}$ is given by
$$
\NE(X^{1,n}_{n+2})=\langle h_{1}-e_{i},h_{2}-e_{i},e_{i},h_{1}+nh_{2}-e_{1}-\cdots-e_{n+2}\rangle.
$$                               
\end{Proposition}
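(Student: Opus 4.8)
The plan is to mimic the inductive structure of the proof of Proposition \ref{Mori_n+1}, but now with one extra point $p_{n+2}$, which produces the extra extremal ray $R := h_1 + nh_2 - e_1 - \dots - e_{n+2}$. This class is the strict transform under $\pi$ of a rational normal curve of degree $n+1$ on the Segre variety $\Sigma^{1,n} \subset \mathbb{P}^{2n+1}$ passing through the $n+2$ general points: indeed such a curve $\Gamma$ has class $h_1 + nh_2$ (it meets a fiber $H_1$ of $\widetilde\pi_1$ once and projects to a rational normal curve of degree $n$ in $\mathbb{P}^n$, so $\Gamma \cdot H_2 = n$), and $n+2$ general points on $\Sigma^{1,n}$ lie on a unique such curve. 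One first checks that all the listed classes are effective and that $R$ is actually extremal — e.g. by exhibiting a nef divisor vanishing exactly on $R$ and non-negative on all the others, or by noting that $R$ spans a one-dimensional face since any effective decomposition $R = \sum a_j C_j$ forces the $C_j$ into the span of $R$ by intersecting with the boundary divisors $H_1 - E_i$, $H_2 - E_i$, $E_i$.

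For the main containment, let $C \subset X^{1,n}_{n+2}$ be an irreducible curve, written as $C \sim d_1 h_1 + d_2 h_2 - \sum_{i=1}^{n+2} m_i e_i$ with $m_i = \mathrm{mult}_{p_i} \pi(C) \ge 0$. Exactly as before, if $C$ is $\pi$-contracted it is a multiple of some $e_i$; if $d_1 - m_i < 0$ for some $i$ then $C$ lies in the strict transform of the fiber of $\pi_1$ through $p_i$ (a $\mathbb{P}^n$ blown up at one point) and decomposes into $e_i$ and $h_2 - e_i$; and if $\widetilde\pi_2(C)$ is a point then $C$ is a combination of the $h_1 - e_i$ and $e_i$. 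So assume $d_1 \ge \max_i m_i$ and that $D := \widetilde\pi_2(C) \subset \mathbb{P}^n$ is a curve of degree $d_2$ with $\mathrm{mult}_{\pi_2(p_i)} D = m_i$. The new feature is the hyperplane count: a curve of degree $d_2$ in $\mathbb{P}^n$ meeting $n+1$ of the points $\pi_2(p_1), \dots, \pi_2(p_{n+2})$ with total multiplicity exceeding $d_2$ must lie in the hyperplane they span.

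I would split into cases according to whether such a hyperplane-containment occurs. If there is a subset $S$ of size $n+1$ (or smaller, spanning a proper linear subspace) with $\sum_{i \in S} m_i > d_2$, then $D$ lies in the corresponding hyperplane $\Pi_S$, so $C$ lies in the strict transform of $\mathbb{P}^1 \times \Pi_S \cong X^{1,n-1}_{n+1}$, and we conclude by the $r = n+1$ case already proved (Proposition \ref{Mori_n+1}), since the relevant extremal rays of $X^{1,n-1}_{n+1}$ all map into the list for $X^{1,n}_{n+2}$. Otherwise $\sum_{i \in S} m_i \le d_2$ for every such $S$; in particular, reordering so that $m_{n+1}, m_{n+2}$ are the two smallest multiplicities, one has $m_1 + \dots + m_n \le d_2$ and also $m_1 + \dots + m_{n+2} \le d_2 + 2\min(m_{n+1},m_{n+2}) \le$ (a quantity controllable against $d_1, d_2$). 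The arithmetic heart is then to write
$$
C \sim a R + \sum_{i=1}^{n+2} b_i (h_2 - e_i) + \sum_{i=1}^{n+2} c_i (h_1 - e_i) + b\, h_1 + c\, h_2
$$
with all coefficients non-negative, where $a$ is forced by the $e_i$-coefficients (so $a \le \min_i m_i = m_{n+2}$) and one checks $d_1 - a \ge 0$ and $d_2 - na - \sum(m_i - a) = d_2 - \sum m_i + (n+2-1)a$... — i.e. a finite linear-programming feasibility check showing the cone generated by the listed rays contains $C$.

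The step I expect to be the main obstacle is precisely this last non-negative decomposition: one must verify that the inequalities $d_1 \ge \max m_i$, $\sum_{i \in S} m_i \le d_2$ for all proper-subspace-spanning $S$, together with $d_2 \ge 0$, suffice to force $C$ into $\langle h_1 - e_i, h_2 - e_i, e_i, R\rangle$, choosing the coefficient $a$ of $R$ correctly (taking $a = 0$ recovers the $r \le n+1$ argument, but when all $m_i$ are large and $d_2$ is just barely $\ge \sum_{i \le n} m_i$ one genuinely needs $a > 0$). This amounts to showing the dual cone is generated by the evident nef classes $H_1 - E_i$, $H_2 - E_i$, $E_i$, $H_1$, $H_2$, $nH_1 + H_2 - \dots$ — equivalently, Proposition \ref{Mori_n+2} is sharp — and I would either run the explicit LP or argue via Proposition \ref{Mori_n+1} applied on a well-chosen $X^{1,n-1}_{n+1}$ inside $X^{1,n}_{n+2}$, handling the residual curve classes that project isomorphically to $\mathbb{P}^n$ by a direct degree bound using $\deg \Sigma^{1,n} = n+1$.
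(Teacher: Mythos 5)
There is a genuine gap, concentrated in the two places where your argument actually has to do work. First, your Case A rests on the false premise that $n+1$ of the projected points span a hyperplane of $\mathbb{P}^n$: $n+1$ general points span all of $\mathbb{P}^n$, and a hyperplane through blown-up points contains only $n$ of them. The correct reduction is therefore to the strict transform of $\mathbb{P}^1\times\Pi_S$ with $|S|=n$, i.e.\ to $X^{1,n-1}_{n}$, which is covered by Proposition \ref{Mori_n+1}; the variety $X^{1,n-1}_{n+1}$ you invoke is \emph{not} covered by that proposition (it is the present statement in dimension $n-1$, so citing it as ``already proved'' is either a misapplication or an unannounced induction on $n$), and moreover its extra extremal ray $h_1+(n-1)h_2-e_{i_1}-\cdots-e_{i_{n+1}}$ does \emph{not} lie in the cone of the statement: it pairs to $-1$ with the nef class $nH_1+(n+1)H_2-n\sum_i E_i$, so your claim that ``the relevant extremal rays of $X^{1,n-1}_{n+1}$ all map into the list'' is false as written (the case is in fact vacuous, which is what saves the statement, but not your argument). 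Second, the ``arithmetic heart'' in Case B --- the non-negative decomposition when no proper linear subspace contains $\widetilde{\pi}_2(C)$ --- is exactly the point you flag as the main obstacle, and you do not carry it out: the displayed computation trails off, and the fallback you propose (checking that the dual cone is generated by the evident nef classes, i.e.\ that the proposition ``is sharp'') is circular, since the nef cone of $X^{1,n}_{n+2}$ (Corollary \ref{Nefcn+2}) is derived from this very proposition by duality. The LP can in fact be solved by hand (with $d_1\geq\max_i m_i$ and $d_2\geq\sum_{i\in S}m_i$ for every $S$ of size $n$, taking the coefficient of $R$ equal to $\min_i m_i$ and distributing the rest among $h_1-e_i$, $h_2-e_i$ works), but as submitted this step is missing.

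For comparison, the paper avoids both issues by working with the Segre embedding $\sigma_{1,n}$ rather than with the projection to $\mathbb{P}^n$: writing $C\sim d_1h_1+d_2h_2-\sum m_ie_i$, if $d_1+d_2\geq\sum m_i$ one pairs each $e_i$ with a copy of $h_1$ or $h_2$ directly, while if $d_1+d_2<\sum m_i$ the curve $\sigma_{1,n}(\pi(C))$, having degree $d_1+d_2$, must lie in the $(n+1)$-plane $\Pi_{1,\dots,n+2}$ spanned by the $n+2$ points; since $\deg\Sigma^{1,n}=n+1$, the section $\Pi_{1,\dots,n+2}\cap\Sigma^{1,n}$ is already a single rational normal curve of degree $n+1$ with class $h_1+nh_2-e_1-\cdots-e_{n+2}$, so $\pi(C)$ \emph{is} that curve and $C$ lies on the new ray. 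This removes any need for an LP, an induction on $n$, or a case analysis over subsets $S$; if you want to keep your more elementary route through $\mathbb{P}^n$, you must repair Case A as above and write out the Case B decomposition explicitly.
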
 
\begin{proof}
The case $n = 1$ is well-known. Let $C$ be an irreducible curve in $X^{1,n}_{n+2}$. If $C$ is contracted by $\pi$ then $C\subset E_i$ for some $i = 1,\dots, n+2$ and hence $C$ is a multiple of $e_i$. Assume that $\pi(C)$ is a curve of bidegree $(d_1,d_2)$ and such that $\mult_{p_i}C = m_i$. Then we may write 
$$
C\sim d_{1}h_{1}+d_{2}h_{2}-m_{1}e_{1}-\dots-m_{n+1}e_{n+1}-m_{n+2}e_{n+2}.$$
If
$$ d_{1}+d_{2}\geq m_{1}+\dots+m_{n+2}$$ 
then $C$ may be written as a linear combination with non negative coefficients of $e_i$, $h_2-e_i$ and $h_{1}-e_{i}$. Recall that $\sigma_{1,n}(\pi(C))$ is a curve of degree $d_{1}+d_{2}$ passing through the points $\sigma_{1,n}(p_{1}),\dots,\sigma_{1,n}(p_{n+2})$ with multiplicities $m_{i}$. 

Let us denote by $\Pi_{1,\dots,n+2}\subset\mathbb{P}^{2n+1}$ the $(n+1)$-plane spanned by $\sigma_{1,n}(p_1),\dots,\sigma_{1,n}(p_{n+2})$. 
If 
$$d_{1}+d_{2}< m_{1}+\dots+m_{n+2}$$ 
then $\sigma_{1,n}(\pi(C))\subset \Pi_{1,\dots,n+2}$. Since $\Sigma^{1,n}$ has degree $n+1$, the intersection $\Pi_{1,\dots,n+2} \cap \Sigma^{1,n}$ is a rational normal curve of degree $n+1$ and with class $h_{1}+nh_{2}-e_{1}-\dots-e_{n+2}$.	
\end{proof}

Let $a,b\geq 1$ be two integers, fix two complementary subspace $\Lambda^a,\Lambda^b\subset\mathbb{P}^{a+b+1}$, two rational normal curves $C_a\subset\Lambda^a, C_b\subset\Lambda^b$, and an isomorphism $\phi:C_a\rightarrow C_b$. The surface 
$$
S_{(a,b)} = \bigcup_{x\in C_a}\left\langle x,\phi(x)\right\rangle\subset\mathbb{P}^{2n+1}
$$
where $\left\langle x,\phi(x)\right\rangle$ denotes the line through $x,\phi(x)$, is a rational normal scroll of type $(a,b)$. This is a smooth rational surface of degree $\deg(S_{(a,b)}) = a+b$.

\begin{Lemma}\label{sur}
Let $H\subset\mathbb{P}^{2n+1}$ be a general $(n+2)$-plane. Then the intersection $H\cap \Sigma^{1,n}$ is a rational normal scroll $S_{(a,b)}$ with
$$
(a,b) = \left\lbrace
\begin{array}{ll}
(\frac{n+1}{2},\frac{n+1}{2}) & \text{if } n \text{ is odd};\\ 
(\frac{n}{2},\frac{n+2}{2}) & \text{if } n \text{ is even}.
\end{array}\right.
$$
\end{Lemma}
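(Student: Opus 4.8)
The plan is to realize $S:=H\cap\Sigma^{1,n}$ explicitly as a scroll over $\mathbb{P}^1$, coming from the ruling of $\Sigma^{1,n}$ by linear spaces, and then to read off its type $(a,b)$ from the splitting type of an associated rank-two vector bundle on $\mathbb{P}^1$.

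First I would set up the rulings. For $[s:t]\in\mathbb{P}^1$ let $V_{[s:t]}=\sigma_{1,n}(\{[s:t]\}\times\mathbb{P}^n)\subset\mathbb{P}^{2n+1}$, a linear $\mathbb{P}^n$ whose points have homogeneous coordinates $x_{0j}=sv_j$, $x_{1j}=tv_j$ for $[v]\in\mathbb{P}^n$. Writing $H=\{L_1=\dots=L_{n-1}=0\}$ with $L_k=\sum_j(\alpha_{kj}x_{0j}+\beta_{kj}x_{1j})$ general, one finds $L_k|_{V_{[s:t]}}=\sum_j(s\alpha_{kj}+t\beta_{kj})v_j$, so $H\cap V_{[s:t]}$ is cut out in $\mathbb{P}^n$ by the rows of the $(n-1)\times(n+1)$ matrix $C(s,t)=\bigl(s\alpha_{kj}+t\beta_{kj}\bigr)_{k,j}$, whose entries are linear in $(s,t)$. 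This data assembles into a morphism of sheaves $C\colon\mathcal{O}_{\mathbb{P}^1}^{n+1}\to\mathcal{O}_{\mathbb{P}^1}(1)^{n-1}$. A determinantal dimension count — the locus of $(n-1)\times(n+1)$ matrices of rank $\le n-2$ has codimension $3$, so a general pencil of such matrices avoids it — shows that for general $\alpha,\beta$ the matrix $C(s,t)$ has rank $n-1$ at every point of $\mathbb{P}^1$; hence $C$ is surjective and $\mathcal{E}:=\ker C$ is a rank-two bundle on $\mathbb{P}^1$ with $\det\mathcal{E}=\mathcal{O}_{\mathbb{P}^1}(-(n-1))$, from $0\to\mathcal{E}\to\mathcal{O}_{\mathbb{P}^1}^{n+1}\to\mathcal{O}_{\mathbb{P}^1}(1)^{n-1}\to0$. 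Pulling the $L_k$ back by $\sigma_{1,n}$ gives bidegree-$(1,1)$ forms $Q_k$ on $\mathbb{P}^1\times\mathbb{P}^n$, and I would identify $S$ with the common zero locus $Z(Q_1,\dots,Q_{n-1})\subset\mathbb{P}^1\times\mathbb{P}^n$, which is exactly $\mathbb{P}(\mathcal{E})$ — the sub-scroll of $\mathbb{P}(\mathcal{O}_{\mathbb{P}^1}^{n+1})=\mathbb{P}^1\times\mathbb{P}^n$ cut out fibrewise by $C$ — embedded into $\mathbb{P}^{2n+1}$ by $\sigma_{1,n}|_{\mathbb{P}(\mathcal{E})}$, i.e.\ by $\mathcal{O}_{\mathbb{P}(\mathcal{E})}(1)\otimes p^*\mathcal{O}_{\mathbb{P}^1}(1)$ with $p\colon\mathbb{P}(\mathcal{E})\to\mathbb{P}^1$. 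In particular $S$ is a smooth irreducible surface (by Bertini, or simply because $\mathbb{P}(\mathcal{E})$ is smooth) and a rational normal scroll of degree $\deg\Sigma^{1,n}=n+1$ spanning $\mathbb{P}^{n+2}$. (One could equally get "scroll" from Bertini and the del Pezzo--Bertini classification of varieties of minimal degree, ruling out the Veronese surface because $\Sigma^{1,n}$, and hence $S$, is swept out by lines — each $H\cap V_{[s:t]}$ is one.)

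It then remains to pin down the type. Writing $\mathcal{E}=\mathcal{O}_{\mathbb{P}^1}(-c_1)\oplus\mathcal{O}_{\mathbb{P}^1}(-c_2)$ with $0\le c_1\le c_2$ and $c_1+c_2=n-1$, one has $\mathbb{P}(\mathcal{E})\cong\mathbb{F}_{c_2-c_1}$, and with the polarization above it is the rational normal scroll $S_{(a,b)}$ determined by $a+b=n+1$ and $b-a=c_2-c_1$; indeed $\mathcal{O}_{\mathbb{P}(\mathcal{E})}(1)\otimes p^*\mathcal{O}(1)=\mathcal{O}_{\mathbb{P}(\mathcal{E}\otimes\mathcal{O}(-1))}(1)$ and $\mathcal{E}\otimes\mathcal{O}(-1)=\mathcal{O}(-c_1-1)\oplus\mathcal{O}(-c_2-1)$, so $S\cong S_{(c_1+1,\,c_2+1)}$. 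Hence the statement reduces to showing that for general $\alpha,\beta$ the bundle $\mathcal{E}$ is \emph{balanced}, i.e.\ $c_2-c_1\le1$ — this is the crux. For this I would use that $-c_1$ is the largest degree of a line subbundle of $\mathcal{E}$, and that a nonzero map $\mathcal{O}_{\mathbb{P}^1}(-c)\to\mathcal{E}\hookrightarrow\mathcal{O}_{\mathbb{P}^1}^{n+1}$ is the same as a nonzero tuple $v(s,t)$ of homogeneous degree-$c$ polynomials with $C(s,t)\,v(s,t)\equiv0$. The conditions $C\cdot v\equiv0$ amount to $(n-1)(c+2)$ linear equations on the $(n+1)(c+1)$-dimensional space of such tuples, so for general $C$ the solution space has dimension $\max\{0,\,2c-n+3\}$, which is positive precisely when $c\ge\lfloor(n-1)/2\rfloor$; therefore $c_1=\lfloor(n-1)/2\rfloor$ and $c_2=(n-1)-c_1=\lceil(n-1)/2\rceil$. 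Consequently $b-a=c_2-c_1$ is $0$ for $n$ odd and $1$ for $n$ even, and $(a,b)=(c_1+1,c_2+1)$ is $\bigl(\tfrac{n+1}{2},\tfrac{n+1}{2}\bigr)$ for $n$ odd and $\bigl(\tfrac n2,\tfrac{n+2}{2}\bigr)$ for $n$ even, which is the asserted value.

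The step I expect to be the main obstacle is exactly this last genericity assertion: that for general $C$ the linear system $C\cdot v\equiv0$ on degree-$c$ tuples has maximal rank, so that the count is sharp — in particular that there is no nonzero solution of degree $c=\lfloor(n-1)/2\rfloor-1$. I would handle this by upper-semicontinuity of the splitting type of $\ker C$ in families, reducing it to exhibiting one explicit pencil $sA+tB$ whose kernel is balanced; the reverse inequality $c_1\le\lfloor(n-1)/2\rfloor$ is in any case unconditional, since for $c=\lfloor(n-1)/2\rfloor$ the solution space of $C\cdot v\equiv0$ has dimension at least $(n+1)(c+1)-(n-1)(c+2)=2c-n+3>0$.
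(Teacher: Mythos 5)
Your argument is essentially the paper's own proof carried out in more detail: the paper likewise realizes $H\cap \Sigma^{1,n}$ as the projectivization of the rank-two kernel of the morphism $\mathcal{O}_{\mathbb{P}^1}(-1)^{n+1}\to\mathcal{O}_{\mathbb{P}^1}^{n-1}$ cut out by $H$, and simply asserts that for general $H$ the splitting type is the balanced one. Your surjectivity check, the syzygy dimension count giving $c_1=\lfloor (n-1)/2\rfloor$, and the semicontinuity-plus-explicit-pencil plan for the maximal-rank step supply precisely the genericity verification that the paper leaves implicit, so the proposal is correct and on the same route.
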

\begin{proof}
The Segre variety $\Sigma^{1,n}$ is the projectivization over $\mathbb{P}^1$ of the rank $n+1$ vector bundle $\mathcal{O}_{\mathbb{P}^1}(-1)^{n+1}$. A codimension $n-1$ linear section corresponds to the projectivization of the kernel of a morphism
$$
\mathcal{O}_{\mathbb{P}^1}(-1)^{n+1}\rightarrow\mathcal{O}_{\mathbb{P}^1}^{n-1}
$$
which is a rank two vector bundle $\mathcal{O}_{\mathbb{P}^1}(-a)\oplus\mathcal{O}_{\mathbb{P}^1}(-b)$. To conclude it is enough to note that for $H$ general the splitting type $(-a,-b)$ is the one given in the statement. 
\end{proof}

\begin{Lemma}\label{lem2}
Let $f:X\rightarrow Y$ be a morphism of projective varieties, and $\NE(f)$ the cone of curves contracted by $f$. Then $\NE(f)$ is extremal in $\NE(X)$.  
\end{Lemma}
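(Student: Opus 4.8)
The plan is to realize $\NE(f)$ as the face of $\NE(X)$ cut out by the supporting hyperplane orthogonal to the pullback of an ample class from $Y$. Since $Y$ is projective, fix an ample divisor $A$ on $Y$ and set $D := f^{*}A \in N^{1}(X)$. For an irreducible curve $C\subset X$ one has $D\cdot C = A\cdot f_{*}[C]$; this vanishes when $f$ contracts $C$, and is strictly positive otherwise, because then $f_{*}[C]$ is a nonzero effective $1$-cycle on $Y$ and $A$ is ample. In particular $D$ is nef, so $D\cdot\gamma\ge 0$ for every $\gamma\in\NE(X)$.

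The first real step is to prove the identity $\NE(f)=\NE(X)\cap D^{\perp}$. The inclusion $\subseteq$ is immediate from the computation above. For the reverse inclusion I would take an arbitrary $\gamma=\sum_{i}a_{i}[C_{i}]\in\NE(X)$ with $a_{i}>0$ and the $C_{i}$ irreducible curves, and use that $0=D\cdot\gamma=\sum_{i}a_{i}(D\cdot C_{i})$ is a sum of nonnegative reals; hence each $D\cdot C_{i}=0$, each $C_{i}$ is $f$-contracted, and $\gamma\in\NE(f)$. It matters here that elements of $\NE(X)$ are honest finite nonnegative combinations of classes of irreducible curves, which is exactly what makes the term-by-term argument work.

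Once this identity is in place, extremality is formal: if $u,v\in\NE(X)$ satisfy $u+v\in\NE(f)$, then $D\cdot u+D\cdot v=D\cdot(u+v)=0$ with both summands $\ge 0$, forcing $D\cdot u=D\cdot v=0$, i.e. $u,v\in\NE(X)\cap D^{\perp}=\NE(f)$. Thus $\NE(f)$ is an extremal face of $\NE(X)$.

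I do not expect a genuine obstacle: this is a standard fact about contraction morphisms, and the only points needing care are (i) invoking projectivity of $Y$ to produce an ample $A$, and (ii) the bookkeeping in $\NE(f)=\NE(X)\cap D^{\perp}$, where one uses that $f_{*}[C]$ is a nonzero effective class for every non-contracted irreducible $C$. The same argument, applied first to finite nonnegative combinations and then passing to limits, shows in addition that $\NEbar(f)$ is a face of $\NEbar(X)$.
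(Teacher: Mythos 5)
Your proof is correct. The paper argues dually and more briefly: it takes the class $\Gamma$ of an irreducible contracted curve, writes $\Gamma=\Gamma_1+\Gamma_2$ with $\Gamma_1,\Gamma_2\in\NE(X)$, applies $f_*$ to get $f_*\Gamma_1+f_*\Gamma_2=f_*\Gamma=0$, and concludes $f_*\Gamma_1=f_*\Gamma_2=0$, hence $\Gamma_1,\Gamma_2\in\NE(f)$. That argument leaves two points implicit, namely that a sum of two classes of $\NE(Y)$ can vanish only if both vanish (saliency of $\NE(Y)$), and that $f_*\Gamma_i=0$ forces every irreducible curve appearing in $\Gamma_i$ with positive coefficient to be contracted; both are justified exactly by pairing with an ample divisor $A$ on $Y$, which is the step you make explicit by pulling back and working with $D=f^*A$. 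So your route, exhibiting $\NE(f)=\NE(X)\cap D^{\perp}$ as the face cut out by the nef class $D$ and deducing extremality formally, is the projection-formula dual of the paper's pushforward argument; what it buys is that the extremality condition is verified for arbitrary elements of $\NE(f)$ rather than only for irreducible curve classes, the hidden positivity on $Y$ is made visible, and, as you observe, the same supporting-hyperplane description passes to limits to give the analogous statement for $\NEbar(f)\subset\NEbar(X)$.
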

\begin{proof}
Let $\Gamma$ be the class of an irreducible curve in $\NE(f)$, and assume that $\Gamma = \Gamma_1 + \Gamma_2$ for $\Gamma_1,\Gamma_2\in\NE(X)$. Applying $f_{*}$ we get $f_{*}\Gamma_1 + f_{*}\Gamma_2 = f_{*}\Gamma = 0$ since $\Gamma$ is contracted by $f$. Therefore, $f_{*}\Gamma_1 = f_{*}\Gamma_2 = 0$ and hence $\Gamma_1,\Gamma_2 \in \NE(f)$.
\end{proof}

\begin{Proposition}\label{nfg_n+3}
Fix $p_1,\dots,p_{n+3}\in \Sigma^{1,n}$ general points. Set $H = \left\langle p_1,\dots,p_{n+3}\right\rangle$. Let $\widetilde{S}_{a,b}$ be the blow-ups of $S_{a,b} = H\cap\Sigma^{1,n}$ at the $p_i$. Then 
$$
\NE(X^{1,n}_{n+3}) = \left\langle \NE(\widetilde{S}_{a,b}), h_1-e_1,\dots,h_1-e_{n+3}\right\rangle.
$$
\end{Proposition}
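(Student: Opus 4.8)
The strategy is to show that every irreducible curve $C \subset X^{1,n}_{n+3}$ has class in the cone on the right-hand side, since the reverse inclusion is clear once one knows that the listed generators are classes of actual curves ($h_1-e_i$ is the strict transform of a fiber of $\widetilde{\pi}_2$ through $p_i$, and the generators of $\NE(\widetilde{S}_{a,b})$ are by construction classes of curves on $\widetilde{S}_{a,b} \subset X^{1,n}_{n+3}$). So fix an irreducible $C$. If $C$ is $\pi$-contracted it lies in some $E_i$ and is a multiple of $e_i = (h_1-e_i) - h_1 + \ldots$; more precisely $e_i \in \NE(\widetilde{S}_{a,b})$ provided $E_i$ meets $\widetilde{S}_{a,b}$, which it does as $p_i \in S_{a,b}$. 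So assume $\pi(C)$ is a curve of bidegree $(d_1,d_2)$ with $\mult_{p_i} C = m_i$, giving $C \sim d_1 h_1 + d_2 h_2 - \sum m_i e_i$.

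First I would dispose of the case where $C$ is contracted by $\widetilde{\pi}_2$: then $\pi(C)$ lies in a fiber $\{pt\}\times \mathbb{P}^1$... wait, the fibers of $\widetilde\pi_2$ are copies of $\mathbb{P}^1$, so $C$ maps to a point in $\mathbb{P}^n$ and $\pi(C)$ is a fiber of $\pi_2$; then $d_2 = 0$, $C \sim d_1 h_1 - \sum m_i e_i$, and such a fiber meets at most one $p_i$ (the points are general), so $C$ is a nonnegative combination of $h_1 - e_i$ and $e_i$, all lying in the right-hand cone. Thus assume $\widetilde{\pi}_2(C)$ is a curve. Now I would push $\pi(C)$ forward via the Segre embedding: $\sigma_{1,n}(\pi(C))$ is a curve of degree $d_1 + d_2$ in $\mathbb{P}^{2n+1}$ through the $\sigma_{1,n}(p_i)$ with multiplicities $m_i$. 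The key dichotomy is whether this curve lies in the $(n+2)$-plane $H = \langle p_1, \ldots, p_{n+3}\rangle$. If it does not, then by a linear-section/degree count — $\sigma_{1,n}(\pi(C))$ meets a general hyperplane containing $H$ properly — one gets the numerical inequality $d_1 + d_2 \geq \sum_{i} m_i$ after removing the contributions of the base points, which by the argument of Proposition \ref{Mori_n+2} (writing $C$ as a nonnegative combination of $e_i$, $h_2 - e_i$, $h_1 - e_i$) forces $[C]$ into the subcone $\langle h_1-e_i, h_2-e_i, e_i\rangle$, hence into the right-hand side. If on the other hand $\sigma_{1,n}(\pi(C)) \subset H$, then $\pi(C) \subset H \cap \Sigma^{1,n} = S_{a,b}$, so $C$ lies in the strict transform $\widetilde{S}_{a,b}$, and then $[C] \in \NE(\widetilde{S}_{a,b})$ by the functoriality of the cone of curves under the inclusion $\widetilde{S}_{a,b} \hookrightarrow X^{1,n}_{n+3}$ (pushforward of an effective curve class is effective).

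The main obstacle is the non-degenerate case: extracting the clean inequality $d_1 + d_2 \geq m_1 + \cdots + m_{n+3}$ from "$\sigma_{1,n}(\pi(C))$ is not contained in the span of the $n+3$ points." One must be careful that the $n+3$ general points on the $n+1$-dimensional Segre variety $\Sigma^{1,n}$ span exactly an $(n+2)$-plane (so $H$ is as expected), that a general hyperplane through $H$ cuts $\sigma_{1,n}(\pi(C))$ in $d_1+d_2$ points counted with multiplicity while already containing all $n+3$ base points, and that the residual intersection is nonnegative — this is where generality of the $p_i$ and the restriction $n \leq 4$ (guaranteeing $S_{a,b}$ is the expected scroll via Lemma \ref{sur}, and that the count of base points does not exceed what the geometry allows) are used. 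I would also need Lemma \ref{lem2} implicitly, or rather its analogue, to know that having reduced to $\widetilde{S}_{a,b}$ the cone $\NE(\widetilde{S}_{a,b})$ sits as claimed; strictly one only needs that the image of $\NE(\widetilde{S}_{a,b})$ in $N_1(X^{1,n}_{n+3})$ is contained in the right-hand cone, which is immediate. Assembling these cases gives $\NE(X^{1,n}_{n+3}) \subseteq \langle \NE(\widetilde{S}_{a,b}), h_1 - e_1, \ldots, h_1 - e_{n+3}\rangle$, and combined with the reverse containment the equality follows.
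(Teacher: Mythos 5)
Your argument is correct and follows essentially the same route as the paper's proof: the dichotomy on whether $\sigma_{1,n}(\pi(C))$ lies in $H$ (equivalently whether $d_1+d_2 < \sum_i m_i$), obtained by intersecting $\sigma_{1,n}(\pi(C))$ with hyperplanes containing $H$, followed in the balanced case by pairing each $e_i$ with a copy of $h_1$ or $h_2$ to write $C$ as a nonnegative combination of $h_1-e_i$, $h_2-e_i$, $e_i$. Two minor points: to conclude you should note explicitly (as the paper does) that $h_2-e_i$, $h_2$ and $e_i$ lie in the right-hand cone because they are classes of strict transforms of ruling lines of $S_{a,b}$ and of exceptional curves of $\widetilde{S}_{a,b}$; and neither Lemma \ref{sur} nor this proposition requires $n\leq 4$ --- that restriction enters only later, in Proposition \ref{Mori_n+3}, where the Mori cones of the blown-up scrolls are computed explicitly.
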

\begin{proof}
Let $C\sim ah_1 + bh_2 - \sum_{i=1}^{n+3}m_ie_i$ be an irreducible curve in $X_{n+3}^{1,n}$, and $\Gamma\subset\Sigma^{1,n}$ its image in $\mathbb{P}^{2n+1}$. Then $\deg(\Gamma) = a+b$ and $\mult_{p_i}\Gamma = m_i$ for $i = 1,\dots,n+3$. 

If $a+b < \sum_{i=1}^{n+3}m_i$ then $\Gamma$ is contained in all the hyperplanes containing $H$ and hence $\Gamma\subset H\cap \Sigma^{1,n}$. By Lemma \ref{sur} $S_{a,b} = H\cap \Sigma^{1,n}$ is a scroll.

If $a+b\geq \sum_{i=1}^{n+3}m_i$ we can pair each $e_i$ with one among $h_1$ and $h_2$, and write $C$ as a linear combination with non negative coefficients of $h_1-e_i,h_2-e_j,e_k$.  

The curves of class $h_2-e_i$ are numerically equivalent to the strict transform of the line through $p_i$ in the ruling of $S_{a,b}$. The curves of class $h_1-e_i$ are contracted by $\widetilde{\pi}_2$ and the curves of class $h_2-e_j$ are contracted by $\widetilde{\pi}_1$. Hence, by Lemma \ref{lem2} $h_1-e_i$ and $h_2-e_j$ generate extremal rays of $\NE(X_{n+3}^{1,n})$.

Summing-up, we have  showed that an irreducible curve $C\subset X_{n+3}^{1,n}$ can be written as a linear combination with non negative coefficients of a curve $\Gamma\subset\widetilde{S}_{a,b}$ and of the $h_1-e_i$.
\end{proof}

\subsection{Mori cones of del Pezzo surfaces}\label{dP}
Let $S_r$ be the blow-up of $\mathbb{P}^2$ at $p_1,\dots,p_r\in\mathbb{P}^2$ general points. The surface $S_r$ is del Pezzo if and only if $0\leq r\leq 8$. We recall the structure of the Mori cone $\NE(S_r)$ for $r = 6,7,8$. We will denote by $\overline{h}$ the pull-back of a line in $\mathbb{P}^{2}$ and by $\overline{e}_{i}$ the exceptional divisor over the point $p_{i}$ for $i=1,\dots,r$. 

\subsubsection{del Pezzo of degree $3$}\label{dP3}
The Mori cone of $S_3$ is generated by the following classes
$$
\begin{tabular}{l|c}
\textit{Divisor class} & \textit{Number of extremal rays} \\ 
\hline 
$\overline{e}_i$ & 6 \\ 
$\overline{h}-\overline{e}_i-\overline{e}_j$ & 15 \\
$2\overline{h}-\overline{e}_{i_1}-\dots-\overline{e}_{i_5}$ & 6
\end{tabular} 
$$
for a total of $27$ extremal rays.

\subsubsection{del Pezzo of degree $2$}\label{dP2}
The Mori cone of $S_2$ is generated by the following classes
$$
\begin{tabular}{l|c}
\textit{Divisor class} & \textit{Number of extremal rays} \\ 
\hline 
$\overline{e}_i$ & 7 \\ 
$\overline{h}-\overline{e}_i-\overline{e}_j$ & 21\\
$2\overline{h}-\overline{e}_{i_1}-\dots-\overline{e}_{i_5}$ & 21\\
$3\overline{h}-2\overline{e}_{i}-\overline{e}_{j_1}-\dots-\overline{e}_{j_6}$ & 7
\end{tabular}
$$
for a total of $56$ extremal rays.

\subsubsection{del Pezzo of degree $1$}\label{dP1}
The Mori cone of $S_1$ is generated by the following classes
$$
\begin{tabular}{l|c}
\textit{Divisor class} & \textit{Number of extremal rays} \\ 
\hline 
$\overline{e}_i$ & 8 \\ 
$\overline{h}-\overline{e}_i-\overline{e}_j$ & 28\\
$2\overline{h}-\overline{e}_{i_1}-\dots-\overline{e}_{i_5}$ & 56\\
$3\overline{h}-2\overline{e}_{i}-\overline{e}_{j_1}-\dots-\overline{e}_{j_6}$ & 56\\
$4\overline{h}-2\overline{e}_{i_1}-\dots-2\overline{e}_{i_3}-\overline{e}_{j_1}-\dots-\overline{e}_{j_5}$ & 56\\
$5\overline{h}-2\overline{e}_{i_1}-\dots -2\overline{e}_{i_6}-\overline{e}_{j_1}-\overline{e}_{j_2}$ & 28\\
$6\overline{h}-3\overline{e}_{i}-2\overline{e}_{i_1}-\dots-2\overline{e}_{j_7}$ & 8
\end{tabular}
$$
for a total of $240$ extremal rays.

\begin{Proposition}\label{Mori_n+3}
	If $n\leq 4$, the Mori cone of $X^{1,n}_{n+3}$ is given by
	$$
	\NE(X^{1,n}_{n+3})=\langle h_{1}-e_{i},h_{2}-e_{i},e_{i},h_{1}+nh_{2}-e_{i_{1}}-\dots-e_{i_{n+2}}\rangle
	$$ 
	for $i,i_{1},\dots,i_{n+2}\in\{1,\dots,n+3\}$.
\end{Proposition}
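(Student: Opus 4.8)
The plan is to reduce, via Proposition~\ref{nfg_n+3}, to the del Pezzo surfaces whose Mori cones are recalled in \ref{dP1}, \ref{dP2}, \ref{dP3}. Both inclusions must be checked, and $\supseteq$ is the easy one: each of the four families of classes is represented by an honest effective curve. Indeed $e_i$ is a line in $E_i$; $h_1-e_i$ is the strict transform of the fiber of $\widetilde{\pi}_2$ through $p_i$; $h_2-e_i$ is the strict transform of a line through $p_i$ in the fiber of $\widetilde{\pi}_1$ over $\widetilde{\pi}_1(p_i)$; and $h_1+nh_2-e_{i_1}-\dots-e_{i_{n+2}}$ is the strict transform of the rational normal curve $\langle\sigma_{1,n}(p_{i_1}),\dots,\sigma_{1,n}(p_{i_{n+2}})\rangle\cap\Sigma^{1,n}$ of degree $n+1$, exactly as in the proof of Proposition~\ref{Mori_n+2}. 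Hence $\NE(X^{1,n}_{n+3})$ contains the cone $\mathcal{C}$ generated by these classes.

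For the reverse inclusion $\NE(X^{1,n}_{n+3})\subseteq\mathcal{C}$, Proposition~\ref{nfg_n+3} shows it suffices to verify that the image in $N_1(X^{1,n}_{n+3})$ of every generator of $\NE(\widetilde{S}_{a,b})$ lies in $\mathcal{C}$, since the rays $h_1-e_i$ are already among the generators of $\mathcal{C}$. The case $n=1$ is classical: there $S_{(1,1)}=\Sigma^{1,1}$, so $\widetilde{S}_{1,1}=X^{1,1}_4$ is a del Pezzo surface of degree $4$ whose $16$ lines are precisely the listed classes. For $n\in\{2,3,4\}$, Lemma~\ref{sur} identifies $S_{a,b}$ with a rational normal scroll, which is $\mathbb{F}_1$ for $n=2,4$ and $\mathbb{P}^1\times\mathbb{P}^1$ for $n=3$, and blowing up the $n+3$ general points $p_i$ produces the del Pezzo surface $\widetilde{S}_{a,b}$ of degree $5-n$, namely the surfaces of \ref{dP3}, \ref{dP2}, \ref{dP1} for $n=2,3,4$ respectively, whose Mori cones are generated by the $(-1)$-curves listed there. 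One should first record that, since the $p_i$ are general on $\Sigma^{1,n}$, their images on $S_{a,b}$ --- together with the centre $q_0$ of the contraction $S_{a,b}\to\mathbb{P}^2$ when $a\neq b$ --- lie in general position, so that $\widetilde{S}_{a,b}$ is a genuine smooth del Pezzo surface and the tables apply; this is exactly where the hypothesis $n\leq4$ enters, as one needs $5-n\geq1$.

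The core computation is that of the push-forward map $N_1(\widetilde{S}_{a,b})\to N_1(X^{1,n}_{n+3})$. It sends $\overline{e}_i$ to $e_i$; it sends the ruling line of the scroll, which is a line of $\mathbb{P}^{2n+1}$ contracted by $\widetilde{\pi}_1$, to $h_2$; and it sends the directrix $C_a\subset S_{a,b}$, a section of the scroll of degree $a$ in $\mathbb{P}^{2n+1}$, to $h_1+(a-1)h_2$. Concretely, when $S_{a,b}=\Bl_{q_0}\mathbb{P}^2$ one gets $\overline{e}_{q_0}\mapsto h_1+(a-1)h_2$, $\overline{h}\mapsto h_1+ah_2$ and $\overline{e}_i\mapsto e_i$, while when $S_{2,2}=\mathbb{P}^1\times\mathbb{P}^1$ its two rulings $F$ and $C_0$ go to $h_2$ and $h_1+h_2$ respectively. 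With this dictionary I would run through the tables of \ref{dP1}, \ref{dP2}, \ref{dP3} and check that the image of each $(-1)$-curve lies in $\mathcal{C}$: it is either one of $e_i$, $h_1-e_i$, $h_2-e_i$, or the special class $h_1+nh_2-e_{i_1}-\dots-e_{i_{n+2}}$ --- the latter being the image of distinguished $(-1)$-curves such as $2\overline{h}-\overline{e}_{q_0}-\overline{e}_{j_1}-\dots-\overline{e}_{j_4}$ for $n=2$, $C_0+2F-\overline{e}_{i_1}-\dots-\overline{e}_{i_5}$ for $n=3$, and $3\overline{h}-2\overline{e}_{q_0}-\overline{e}_{j_1}-\dots-\overline{e}_{j_6}$ for $n=4$ --- or else an explicit non-negative combination of these, for which one repeatedly uses the trivial fact that $D-\sum_{i\in T}e_i\in\mathcal{C}$ and $T'\subseteq T$ force $D-\sum_{i\in T'}e_i\in\mathcal{C}$.

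The main obstacle is not conceptual but combinatorial, and it is worst for $n=4$, where $S_1$ carries $240$ $(-1)$-curves. The verification becomes manageable once organized according to the multiplicity $m_0\in\{0,1,2,3\}$ at $q_0$ of the curve under consideration: a short list of these classes maps onto the special ray $h_1+4h_2-e_{i_1}-\dots-e_{i_6}$, whereas every other one either coincides with some $e_i$, $h_1-e_i$, $h_2-e_i$ or splits off such summands together with at most one copy of $h_1=(h_1-e_j)+e_j$ or $h_2=(h_2-e_j)+e_j$. Combining this with the rays $h_1-e_i$ provided by Proposition~\ref{nfg_n+3} yields $\NE(X^{1,n}_{n+3})\subseteq\mathcal{C}$, hence the desired equality.
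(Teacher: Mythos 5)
Your proposal is correct and follows essentially the same route as the paper: reduce via Proposition~\ref{nfg_n+3} to the blown-up scroll, identify $\widetilde{S}_{a,b}$ as a del Pezzo surface of degree $5-n$, compute the push-forward dictionary (your formulas $\overline{e}_{q_0}\mapsto h_1+(a-1)h_2$, $\overline{h}\mapsto h_1+ah_2$ agree with the paper's maps in all three cases, as do the distinguished classes hitting the ray $h_1+nh_2-e_{i_1}-\dots-e_{i_{n+2}}$), and decompose the remaining $(-1)$-classes. The only difference is presentational: you outline the exhaustive run through the $27$, $56$ and $240$ classes, whereas the paper writes out every decomposition explicitly.
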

\begin{proof}
	The case $n = 1$ is well-known.
	Let $C$ be an irreducible curve in $X^{1,n}_{n+3}$. If $C$ is contracted by $\pi$ then $C\subset E_i$ for some $i = 1,\dots, n+3$ and hence $C$ is a multiple of $e_i$. Now, assume that $\pi(C)$ is a curve of bidegree $(d_1,d_2)$ in $\P^1\times \P^n$ and such that $\mult_{p_i}C = m_i$. This means that we can write 
$$C\sim d_{1}h_{1}+d_{2}h_{2}-m_{1}e_{1}-\dots-m_{n+2}e_{n+2}-m_{n+3}e_{n+3}.$$
First, we suppose that $$d_{1}+d_{2}\geq m_{1}+\dots+m_{n+3}.$$ Then $C$ may be written as a linear combination with non negative coefficients of $e_i$, $h_2-e_i$ and $h_{1}-e_{i}$. 
	
Note that $\sigma_{1,n}(\pi(C))$ is a curve of degree $d_{1}+d_{2}$ passing through the points $\sigma_{1,n}(p_{1}),\dots,\sigma_{1,n}(p_{n+3})$ with multiplicities $m_{i}$.	Let $\Pi_{1,\dots,n+3}\subset\mathbb{P}^{2n+1}$ be the $(n+2)$-plane generated by $\sigma_{1,n}(p_1),\dots,\sigma_{1,n}(p_{n+3})$. If
$$d_{1}+d_{2}< m_{1}+\dots+m_{n+3}$$ 
then $\sigma_{1,n}(\pi(C))\subset \Pi_{1,\dots,n+3}$. By Lemma \ref{sur} the intersection $X = \Pi_{1,\dots,n+3}\cap \Sigma^{1,n}$ is a degree $n+1$ scroll in $\mathbb{P}^{2n+1}$, and by Proposition \ref{nfg_n+3} the Mori cone of $X^{1,n}_{n+3}$ is generated by the Mori cone of $X$ and the $h_1-e_i$ for $i=1,\dots,n+3$. Note that $\sigma_{1,n}(\pi(C))\subset X$. Let us consider separately the cases $n=2,3,4$.
	
If $n=2$ then $X$ is a cubic scroll and the projection $\pi_2:X \to \mathbb{P}^{2}$ is the blow-down of the exceptional divisor. Hence, $C\subset \Bl_{5}X\cong\Bl_{6}\mathbb{P}^{2}$, which is a del Pezzo surface of degree three. Let $\overline{h}$ be the pull-back of a line in $\P^2$ and $\overline{e}_i$ the classes of the exceptional divisors. The Mori cone of $\Bl_{6}\mathbb{P}^{2}$ is described in \ref{dP3}. The isomorphism $\Bl_{6}\mathbb{P}^{2}\rightarrow \Bl_{5}X\subset X_{5}^{1,2}$ induces a map $N_1(\Bl_{6}(\mathbb{P}^{2})) \to N_1(X_{5}^{1,2})$ such that
$$
\overline{e}_{1}\mapsto h_{1},\: \overline{h} \mapsto h_1+ h_2,\: \overline{e}_{i}\mapsto e_{i-1},\: \text{for}\: i = 2,\dots,5. 
$$
To conclude it is enough to show that the generators of $\NE(\Bl_{6}\mathbb{P}^{2})$ can be written as linear combinations with non negative coefficients of the effective classes listed in the statement. 
	
First, note that $\overline{e}_i = e_{i-1}$ for $i = 2,\dots,6$, and $\overline{e}_1 = h_1 = (h_1 - e_i) + e_i$. Furthermore
\begin{scriptsize}
$$
\begin{array}{ll}
\overline{h}-\overline{e}_i-\overline{e}_j & = (h_1-e_{i-1})+(h_2-e_{j-1}) \text{ for } i,j = 2,\dots,6;\\
\overline{h}-\overline{e}_1-\overline{e}_j & = h_2-e_{j-1} \text{ for } j = 2,\dots,6;\\
2\overline{h}-\overline{e}_{i_1}- \dots - \overline{e}_{i_5} & = (h_1+2h_2-{e}_{i_1-1}- \dots - {e}_{i_4-1})+(h_1-{e}_{i_5-1}) \text{ for } 2\leq i_1,\dots, i_5 \leq 6;\\
2\overline{h}-\overline{e}_{1}- \overline{e}_{j_1} - \dots - \overline{e}_{j_4} & = h_1 + 2h_2 - -e_{j_1-1} - \dots - e_{j_4-1}.
\end{array}
$$
\end{scriptsize}
If $n=3$ then $X$ is a quartic scroll given by the intersection in $\mathbb{P}^1\times\mathbb{P}^3$ of two hypersurfaces of bidegree $(1,1)$
$$X = \{x_{0}f_{1}+x_{1}f_{2} = x_{0}g_{1}+x_{1}g_{2}=0\}\subset \mathbb{P}^1\times\mathbb{P}^3$$
with $f_i,g_i \in k[y_0,\dots,y_3]_1$.

The image of the projection $X\rightarrow\mathbb{P}^3$ is the quadric surface $\{f_1g_2-f_2g_1 = 0\}\subset\mathbb{P}^3$. Note that since $f_1,f_2,g_1,g_2$ do not vanish simultaneously at a point such projection is an isomorphism. 	
	
Hence, we have that	$C\subset \Bl_{6}X\cong\Bl_{6}(\mathbb{P}^{1}\times\mathbb{P}^{1})\cong \Bl_{7}(\mathbb{P}^{2}),$ which is a del Pezzo surface of degree two. The Mori cone of $\Bl_{7}(\P^2)$ is described in \ref{dP2}. The inclusion $\Bl_{7}(\mathbb{P}^{2})\rightarrow X^{1,3}_{6}$ maps
$$
\overline{h}\mapsto h_1+2h_2-e_1,\: \overline{e}_1\mapsto h_2-e_1,\: \overline{e}_2\mapsto h_1+h_2-e_1, \: \overline{e}_i\mapsto e_{i-1},\: \text{for}\: i= 3,\dots 7.
$$
For the $\overline{e}_i$ there is nothing to prove. We have 
\begin{scriptsize}
$$
\begin{array}{ll}
\overline{h}-\overline{e}_1-\overline{e}_2 & = e_1;\\
\overline{h}-\overline{e}_1-\overline{e}_i & = (h_1-e_1)+(h_2-e_{i-1})+e_1;\\
\overline{h}-\overline{e}_2-\overline{e}_i & = h_2-e_{i-1};\\
\overline{h}-\overline{e}_i-\overline{e}_j & = (h_1-e_1)+(h_2-e_{i-1})+(h_2-e_{j-1});\\
2\overline{h}-\overline{e}_1-\overline{e}_2 -\overline{e}_{i_1}-\overline{e}_{i_2}-\overline{e}_{i_3} & = (h_1 -e_{i_1-1})+(h_2 -e_{i_2-1}) + (h_2 -e_{i_3-1});\\
2\overline{h}-\overline{e}_1 -\overline{e}_{i_1}-\overline{e}_{i_2}-\overline{e}_{i_3}-\overline{e}_{i_4} & = (h_1-e_1)+(h_1-{e}_{i_1-1})+(h_2-{e}_{i_2-1})+(h_2-{e}_{i_3-1})+(h_2-{e}_{i_4-1});\\
2\overline{h}-\overline{e}_2 -\overline{e}_{i_1}-\overline{e}_{i_2}-\overline{e}_{i_3}-\overline{e}_{i_4} & =
h_1+3h_2-e_1 - e_{i_1-1} - e_{i_2-1} - e_{i_3-1} - e_{i_4-1};\\
3\overline{h}-2\overline{e}_1-\overline{e}_2-\overline{e}_{i_1}-\dots-\overline{e}_{i_5} & = h_1+(h_1+3h_2-e_{i_1-1}-\dots-e_{i_5-1});\\ 
3\overline{h}-\overline{e}_1-2\overline{e}_2-\overline{e}_{i_1}-\dots-\overline{e}_{i_5} & =  h_1+3h_2-e_{i_1-1}-\dots-e_{i_5-1};\\
3\overline{h}-\overline{e}_1-\overline{e}_2-2\overline{e}_{i_1}-\dots-\overline{e}_{i_5} & =  (h_1-e_1)+(h_2-e_{i_1-1})+(h_1+3h_2-e_{i_1-1}-\dots-e_{i_5-1}).
\end{array}
$$
\end{scriptsize}
If $n=4$ then $X$ is a complete intersection in $\mathbb{P}^{1}\times\mathbb{P}^{4}$ of three hypersurfaces of bidegree $(1,1)$
$$X = \{x_{0}f_{1}+x_{1}f_{2} = x_{0}g_{1}+x_{1}g_{2}= x_{0}h_{1}+x_{1}h_{2} = 0\}\subset \mathbb{P}^{1}\times\mathbb{P}^{4}$$
with $f_{i},g_{i},h_i\in k[y_{0},\dots,y_{4}]_{1}$. Note that $X$ is projected onto a cubic $3$-fold $S\subset\mathbb{P}^4$. Recall that $S$ is isomorphic to the blow-up of $\mathbb{P}^2$ at a point that we can assume to be $[1:0:0]$. The rational map 
$$
\begin{array}{cccc}
\tau: & \mathbb{P}^2 & \dasharrow & \mathbb{P}^1\times\mathbb{P}^4\\
 & [x,y,z] & \longmapsto & ([y,z],[xy,xz,y^2,yz,z^2])
\end{array}
$$
yields an embedding $\widetilde{\tau}:S\rightarrow \mathbb{P}^1\times\mathbb{P}^4$. Denote by $\overline{h}$ the pull-back to $S$ of a line in $\mathbb{P}^2$ and by $\overline{e}_1\subset S$ the exceptional divisor over $[1:0:0]$. Hence, $X$ is isomorphic to $\mathbb{P}^2$ blown-up at eight points that is to a del Pezzo surface of degree one whose generators of the Mori cone are listed in \ref{dP1}. Then $\widetilde{\tau}$ lifts to an embedding $X\rightarrow X^{1,4}_7$ which in turn maps
$$
\overline{h}\mapsto h_1+2h_2,\: \overline{e}_1\mapsto h_1+h_2,\: \overline{e}_{i}\mapsto e_{i-1}, \: \text{for}\: i = 2,\dots,8.
$$
We have
\begin{scriptsize}
$$
\begin{array}{ll}
\overline{h}-\overline{e}_1-\overline{e}_j & =  h_2 -e_{j-1}; \\
\overline{h}-\overline{e}_i-\overline{e}_j & =  (h_1-e_1)+(h_2-e_{i-1})+(h_2-e_{j-1}) + e_1; \\
2\overline{h}-\overline{e}_{1}-\overline{e}_{i_1}-\dots-\overline{e}_{i_4} & = (h_1 - e_{i_1-1}) + (h_2 - e_{i_2-1}) + (h_2 - e_{i_3-1}) + (h_2 - e_{i_4-1});  \\
2\overline{h}-\overline{e}_{i_1}- \dots-\overline{e}_{i_5} & = \sum_{k=1}^2 (h_1 - e_{i_k-1})+ \sum_{t=3}^5 (h_2 - e_{i_t-1}) + (h_2 - e_{1}) + e_1;\\
3\overline{h}-2\overline{e}_{i}-\overline{e}_{j_1}-\dots-\overline{e}_{j_6} & = 2(h_1 - e_{i-1}) + (h_1- e_{j_1-1}) + \sum_{k=1}^6(h_2 - e_{j_k-1}) +  e_{j_1-1}; \\
3\overline{h}-2\overline{e}_{1}-\overline{e}_{j_1}-\dots-\overline{e}_{j_6} & = 
h_1+4h_2 -e_{j_1-1}-\dots - e_{j_6-1};\\
3\overline{h}-2\overline{e}_{i}-\overline{e}_1 - \overline{e}_{j_1}-\dots-\overline{e}_{j_5} & = 2(h_1 - e_{i-1}) + (h_2 - e_{j_1-1}) + \dots + (h_2 - e_{j_5-1}); \\
4\overline{h}-2\overline{e}_{1}-2\overline{e}_{i_1}-2\overline{e}_{i_2}-\overline{e}_{j_1}-\dots-\overline{e}_{j_5} & =  2(h_2 -e_{i_1-1})+(h_1 - e_{i_2-1})+(h_1+ 4h_2 -e_{i_2-1} - \sum_{k=1}^5e_{j_k-1});  \\
4\overline{h}-2\overline{e}_{i_1}-\dots-2\overline{e}_{i_3}-\overline{e_1}-\overline{e}_{j_1}-\dots-\overline{e}_{j_4} & =  \sum_{k=1}^3 2(h_2 - e_{i_k}) + (h_2 - e_{j_1-1}) + \sum_{t=2}^4(h_1 - e_{j_t-1}); \\
5\overline{h}-2\overline{e}_1-2\overline{e}_{i_1}-\dots -2\overline{e}_{i_5}-\overline{e}_{j_1}-\overline{e}_{j_2} & = (h_1 + 4h_2 -2\sum_{k=1}^3e_{i_k-1})+2\sum_{t=1}^2(h_t-e_{i_{t+3}-1})+\sum_{k=1}^2(h_2-e_{j_k-1}); \\
5\overline{h}-2\overline{e}_{i_1}-\dots -2\overline{e}_{i_6}-\overline{e}_{1}-\overline{e}_{j} & = \sum_{k=1}^4 2(h_2 - e_{i_k-1}) + \sum_{t=5}^6 2(h_1- e_{i_t-1}) + (h_2 - e_{j-1}); \\
6\overline{h}-2\overline{e}_1-3\overline{e}_2-2\overline{e}_{3}-\dots-2\overline{e}_{8}  & = 2(h_1+4h_2-e_3-\dots -e_8) + (h_1-e_2) + 2(h_2-e_2);\\
6\overline{h}-3\overline{e}_1-2\overline{e}_2-\dots-2\overline{e}_{8} & = 2(h_1+4h_2-e_2-\dots -e_7) + (h_1-e_8) + (h_2-e_8);
\end{array}
$$
\end{scriptsize}
concluding the proof.
\end{proof}

\begin{Corollary}\label{Nefcn+1}
If $r\leq n+1$ the nef cone of $X^{1,n}_{r}$ is given by 
$$
\Nef(X^{1,n}_{r}) = \left\langle H_1,H_2,H_1+H_2-E_{i_1}-\dots - E_{i_r}\right\rangle
$$
with $r = 1,\dots,n+1$. In particular, $\Nef(X^{1,n}_{n+1})$ has $2^{n+1}+1$ extremal rays.
\end{Corollary}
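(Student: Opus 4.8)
The plan is to compute $\Nef(X^{1,n}_r)$, for $r\le n+1$, as the cone dual to the Mori cone $\NE(X^{1,n}_r)=\langle h_1-e_i,h_2-e_i,e_i\rangle$ of Proposition \ref{Mori_n+1} (which is finitely generated, hence closed, so that $\Nef(X^{1,n}_r)=\NE(X^{1,n}_r)^{\vee}$). First I would record the intersection pairing: by the projection formula together with $\mathcal{O}_X(E_i)|_{E_i}\cong\mathcal{O}_{\mathbb{P}^n}(-1)$ one gets $H_1\cdot h_1=H_2\cdot h_2=1$, $H_1\cdot h_2=H_2\cdot h_1=0$, $E_i\cdot e_j=-\delta_{ij}$, and all remaining products among $H_1,H_2,E_1,\dots,E_r$ and $h_1,h_2,e_1,\dots,e_r$ vanish. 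Hence, writing $D=aH_1+bH_2-\sum_i c_iE_i$, we have $D\cdot(h_1-e_i)=a-c_i$, $D\cdot(h_2-e_i)=b-c_i$ and $D\cdot e_i=c_i$, so that
$$
\Nef(X^{1,n}_r)=\Big\{\,aH_1+bH_2-\sum_i c_iE_i\ :\ a\ge 0,\ b\ge 0,\ 0\le c_i\le\min(a,b)\text{ for all }i\,\Big\}.
$$

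Next I would check that $H_1$, $H_2$ and each $H_1+H_2-\sum_{i\in S}E_i$ (with $S\subseteq\{1,\dots,r\}$) satisfies these inequalities, hence is nef, and then that these classes generate the cone above. For the latter, let $D=aH_1+bH_2-\sum_i c_iE_i$ be nef; relabelling the exceptional divisors (harmless, since the proposed generating set is stable under permutations of the $E_i$) I may assume $c_1\ge c_2\ge\dots\ge c_r\ge 0$, and I set $c_{r+1}:=0$. The telescoping identity
$$
D=(a-c_1)H_1+(b-c_1)H_2+\sum_{k=1}^{r}(c_k-c_{k+1})\Big(H_1+H_2-\sum_{i=1}^{k}E_i\Big)
$$
then writes $D$ as a non-negative combination of the listed classes, since $a-c_1\ge 0$, $b-c_1\ge 0$ and $c_k-c_{k+1}\ge 0$. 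This establishes the displayed formula for $\Nef(X^{1,n}_r)$; the summand $S=\emptyset$ is redundant, being the class $H_1+H_2$.

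For the extremal rays when $r=n+1$: since $\Nef(X^{1,n}_{n+1})$ is spanned by the finite set $\{H_1,H_2\}\cup\{H_1+H_2-\sum_{i\in S}E_i:S\subseteq\{1,\dots,n+1\}\}$, every extremal ray is spanned by one of these classes, and it remains to single out the extremal ones. The class $H_1+H_2$ ($S=\emptyset$) is the sum of the two generators $H_1$ and $H_2$, hence not extremal. For each of the remaining classes I would exhibit $n+2$ linearly independent supporting hyperplanes of $\Nef(X^{1,n}_{n+1})$ — drawn from $\{a-c_i=0\}$, $\{b-c_i=0\}$, $\{c_i=0\}$ — vanishing on it, forcing the face it spans to be one-dimensional: for $S\ne\emptyset$ the relations $a-c_i=0=b-c_i$ ($i\in S$) and $c_i=0$ ($i\notin S$) cut out exactly $\mathbb{R}_{\ge 0}(H_1+H_2-\sum_{i\in S}E_i)$, while $c_1=\dots=c_{n+1}=0$ together with $b-c_1=0$ (resp. $a-c_1=0$) cut out exactly $\mathbb{R}_{\ge 0}H_1$ (resp. $\mathbb{R}_{\ge 0}H_2$). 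Therefore the extremal rays are $H_1$, $H_2$ and the $2^{n+1}-1$ classes $H_1+H_2-\sum_{i\in S}E_i$ with $\emptyset\ne S\subseteq\{1,\dots,n+1\}$, for a total of $2^{n+1}+1$.

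The whole argument is a finite convex-geometry computation and no serious obstacle is expected; the two points requiring genuine care are the completeness of the generating set — i.e. the telescoping decomposition of an arbitrary nef class — and, for the final assertion, the verification that each candidate generator really spans a one-dimensional face of $\Nef(X^{1,n}_{n+1})$.
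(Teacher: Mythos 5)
Your argument is correct and is exactly the paper's approach: the paper's proof of this corollary is the one-line observation that the statement follows by duality from Proposition \ref{Mori_n+1}, and your computation (the intersection pairing, the telescoping decomposition of a nef class, and the identification of the one-dimensional faces) simply supplies the details of that dualization. No gaps; the only cosmetic point is that the generator with $S=\emptyset$ is redundant, which you already note.
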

\begin{proof}
The claim follows by duality from Proposition \ref{Mori_n+1}. 
\end{proof}

\begin{Corollary}\label{Nefcn+2}
The nef cone of $X^{1,n}_{n+2}$ is given by 
$$
\Nef(X^{1,n}_{n+2}) = \left\langle H_1,H_2,H_1+H_2-E_{i_1}-\dots - E_{i_r},2H_1+H_2-\sum_{i=1}^{n+2}E_i,nH_1+(n+1)H_2-n\sum_{i=1}^{n+2}E_i\right\rangle
$$
with $r = 1,\dots,n+1$. In particular, $\Nef(X^{1,n}_{n+2})$ has $2^{n+2}+2$ extremal rays.
\end{Corollary}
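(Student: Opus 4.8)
The claim follows by dualising Proposition \ref{Mori_n+2}. Recall that the intersection pairing on $X^{1,n}_{n+2}$ satisfies $H_1\cdot h_1=H_2\cdot h_2=1$, $H_1\cdot h_2=H_2\cdot h_1=0$, $E_i\cdot e_j$ equals $-1$ if $i=j$ and $0$ otherwise, and $E_i\cdot h_k=0$. Hence a class $D=aH_1+bH_2-\sum_{i=1}^{n+2}c_iE_i$ pairs with the generators of $\NE(X^{1,n}_{n+2})$ listed in Proposition \ref{Mori_n+2} as $D\cdot(h_1-e_i)=a-c_i$, $D\cdot(h_2-e_i)=b-c_i$, $D\cdot e_i=c_i$ and $D\cdot(h_1+nh_2-e_1-\dots-e_{n+2})=a+nb-\sum_ic_i$, so $D$ is nef if and only if
\[
c_i\ge 0,\qquad a\ge c_i,\qquad b\ge c_i\quad(i=1,\dots,n+2),\qquad a+nb\ge\sum_{i=1}^{n+2}c_i .
\]
A direct substitution shows that $H_1$, $H_2$, the classes $H_1+H_2-E_{i_1}-\dots-E_{i_r}$ with $1\le r\le n+1$, $2H_1+H_2-\sum_iE_i$ and $nH_1+(n+1)H_2-n\sum_iE_i$ all satisfy these inequalities; let $\mathcal{C}$ denote the cone they generate, so $\mathcal{C}\subseteq\Nef(X^{1,n}_{n+2})$.

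For the reverse inclusion I would pass to the dual side. Since $\NE(X^{1,n}_{n+2})$ is polyhedral (Proposition \ref{Mori_n+2}), it suffices to prove $\mathcal{C}^{\vee}\subseteq\NE(X^{1,n}_{n+2})$; together with $\NE(X^{1,n}_{n+2})\subseteq\mathcal{C}^{\vee}$, which is the dual of $\mathcal{C}\subseteq\Nef(X^{1,n}_{n+2})$, this yields $\mathcal{C}^{\vee}=\NE(X^{1,n}_{n+2})$ and hence $\mathcal{C}=\NE(X^{1,n}_{n+2})^{\vee}=\Nef(X^{1,n}_{n+2})$. So let $C=xh_1+yh_2+\sum_iz_ie_i$ pair non-negatively with every generator of $\mathcal{C}$; concretely this says $x,y\ge 0$, $x+y+\sum_{i\in S}z_i\ge 0$ for every $S\subseteq\{1,\dots,n+2\}$ with $1\le|S|\le n+1$, $2x+y+\sum_iz_i\ge 0$ and $nx+(n+1)y+n\sum_iz_i\ge 0$. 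I want to express $C$ as a non-negative combination $\sum_i\alpha_i(h_1-e_i)+\sum_i\beta_i(h_2-e_i)+\sum_i\gamma_ie_i+\delta(h_1+nh_2-e_1-\dots-e_{n+2})$. Matching coefficients forces $\gamma_i=\alpha_i+\beta_i+\delta+z_i$, so such an expression exists iff one can find $\delta\in[0,\min(x,y/n)]$ and $\alpha_i,\beta_i\ge 0$ with $\sum_i\alpha_i=x-\delta$, $\sum_i\beta_i=y-n\delta$ and $\alpha_i+\beta_i\ge-\delta-z_i$; and for a fixed such $\delta$ this last (transportation-type) system is solvable precisely when $\sum_i\max(0,-\delta-z_i)\le x+y-(n+1)\delta$.

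Producing a suitable $\delta$ is the heart of the matter, and is the step I expect to be the main obstacle. Relabel so that $z_1\le\dots\le z_{n+2}$. If at most $n+1$ of the $z_i$ are negative one takes $\delta=0$, and the inequality to verify is exactly the hypothesis for $S=\{\,i:z_i<0\,\}$ (or is trivial). If all $z_i$ are negative one takes $\delta=\min\bigl(-z_{n+1},\,\min(x,y/n)\bigr)$: when $\delta=-z_{n+1}$ the inequality reduces to the hypothesis for $S=\{1,\dots,n+1\}$, and when $\delta=\min(x,y/n)<-z_{n+1}$ it reduces either to the same hypothesis or, according to whether $\min(x,y/n)$ equals $x$ or $y/n$, to $2x+y+\sum_iz_i\ge 0$ or to $nx+(n+1)y+n\sum_iz_i\ge 0$. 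Organising this case analysis cleanly is the only real work; it establishes $\mathcal{C}=\Nef(X^{1,n}_{n+2})$.

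Finally, to count the extremal rays I would check that each listed generator $D$ is orthogonal, under the pairing above, to $n+3$ linearly independent classes among the generators of $\NE(X^{1,n}_{n+2})$: for $H_1$ the classes $h_2-e_i$ and $e_i$; for $H_2$ the classes $h_1-e_i$ and $e_i$; for $H_1+H_2-\sum_{j\in S}E_j$ the classes $h_1-e_i,h_2-e_i$ with $i\in S$ together with the $e_i$, $i\notin S$, and in addition $h_1+nh_2-e_1-\dots-e_{n+2}$ when $|S|=n+1$; for $2H_1+H_2-\sum_iE_i$ the classes $h_2-e_i$ together with $h_1+nh_2-e_1-\dots-e_{n+2}$; and for $nH_1+(n+1)H_2-n\sum_iE_i$ the classes $h_1-e_i$ together with $h_1+nh_2-e_1-\dots-e_{n+2}$. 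Since the intersection pairing is perfect and $\rank\Pic(X^{1,n}_{n+2})=n+4$, the subspace of $N^1(X^{1,n}_{n+2})$ annihilating these curve classes is a line, necessarily spanned by $D$, so $D$ generates an extremal ray of $\Nef(X^{1,n}_{n+2})$. The listed rays are pairwise distinct, and their number is $2+\sum_{r=1}^{n+1}\binom{n+2}{r}+2=2^{n+2}+2$, as claimed.
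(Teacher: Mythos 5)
Your proposal is correct and takes essentially the same route as the paper, whose entire proof is the one-line observation that the statement follows by duality from Proposition \ref{Mori_n+2}; you simply carry out that dualization explicitly, and your choice of $\delta$ with the ensuing case analysis, the transportation-type feasibility criterion, and the rank-$(n+3)$ orthogonality check for extremality are all sound. The paper leaves these verifications implicit (they are also confirmed by the authors' Magma scripts), so your write-up is a more detailed version of the same argument rather than a different one.
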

\begin{proof}
The claim follows by duality from Proposition \ref{Mori_n+2}. 
\end{proof}

\begin{Corollary}\label{Nefcn+3}
Consider the following divisor classes on $X_{n+3}^{1,n}$:
$$
\begin{array}{lll}
D_t & = & H_{1}+H_2-E_{i_1}-\dots -E_{i_t};\\ 
D_r & = & 2H_1+H_{2}-E_{i_{1}}-\dots-E_{i_{r}};\\
D_r' & = & nH_1+(n+1)H_{2}-nE_{i_{1}}-\dots-nE_{i_{r}};\\
D_{k,s} & = & kH_{1}+kH_{2}-kE_{i_1}-\dots-kE_{i_2}-(k-1)E_{i_{s+1}}-\dots -(k-1)E_{i_{n+3}}.
\end{array} 
$$
If $n\leq 4$ then the nef cone of $X^{1,n}_{n+3}$ is given by 
$$
\Nef(X^{1,n}_{n+3}) = \left\langle H_1,H_2,D_t,D_r,D_r',D_{k,s}\right\rangle
$$
with $t = 1,\dots,n+1$, $r\in\{n+2,n+3\}$, $k = 2,\dots,n+1$ and $s = n-k+2$. In particular, if $n\leq 4$ then $\Nef(X^{1,n}_{n+3})$ has $2^{n+4}-\frac{(n+3)(n+2)}{2}$ extremal rays.
\end{Corollary}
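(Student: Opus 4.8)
The plan is to obtain $\Nef(X^{1,n}_{n+3})$ by dualising the description of the Mori cone in Proposition \ref{Mori_n+3}. Since for $n\le 4$ that proposition exhibits $\NE(X^{1,n}_{n+3})$ as a rational polyhedral (hence closed) cone, we have $\Nef(X^{1,n}_{n+3})=\NE(X^{1,n}_{n+3})^{\vee}$, and a class $D=aH_1+bH_2-\sum_{i=1}^{n+3}c_iE_i$ is nef precisely when it pairs non-negatively with each of the $4(n+3)$ generators of $\NE(X^{1,n}_{n+3})$. Using $H_1\cdot h_1=H_2\cdot h_2=1$, $H_1\cdot h_2=H_2\cdot h_1=0$, $E_i\cdot e_j=-\delta_{ij}$ and $H_k\cdot e_i=E_i\cdot h_k=0$, these conditions amount to the inequalities $c_i\ge 0$, $a\ge c_i$ and $b\ge c_i$ for every $i$, together with $a+nb\ge \sum_{j\ne\ell}c_j$ for every $\ell\in\{1,\dots,n+3\}$.

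First I would check that each class in the list $H_1,H_2,D_t,D_r,D_r',D_{k,s}$ satisfies all of these inequalities. This is a short direct computation: for instance $D_{k,s}$, whose coefficients of $H_1$ and $H_2$ are $k$ and whose exceptional coefficients are $s$ copies of $k$ together with $n+3-s$ copies of $k-1$ (recall $s=n-k+2$), pairs with $h_1+nh_2-\sum_{j\ne\ell}e_j$ to give $c_\ell-(k-1)\ge 0$, which holds since every exceptional coefficient is $k-1$ or $k$; the remaining inequalities are even more immediate. Hence all listed classes are nef and the cone $\mathcal C$ they generate is contained in $\Nef(X^{1,n}_{n+3})$.

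Next I would prove $\mathcal C=\Nef(X^{1,n}_{n+3})$, equivalently that the listed classes are exactly the extremal rays of $\NE(X^{1,n}_{n+3})^{\vee}$. The symmetric group permuting the points $p_1,\dots,p_{n+3}$ acts on $X^{1,n}_{n+3}$ and on all the cones involved, so it suffices to treat one representative of each orbit of claimed generators; for each representative one exhibits $n+4=\rk\Pic(X^{1,n}_{n+3})-1$ of the above inequalities that it saturates and that are linearly independent, which proves it spans an extremal ray, and then one checks — by enumerating which subfamilies of the $4(n+3)$ inequalities can be simultaneously saturated — that no other ray occurs. Once $n$ is fixed this is a finite check, carried out directly (or with polyhedral software) for $n=2,3,4$, in the same spirit as the case analysis in the proof of Proposition \ref{Mori_n+3}. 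Finally, counting orbits — $2$ classes $H_1,H_2$; $\binom{n+3}{t}$ classes $D_t$ for $1\le t\le n+1$; $n+4$ classes in each of the two families $D_r,D_r'$ with $r\in\{n+2,n+3\}$; and $\binom{n+3}{n-k+2}$ classes $D_{k,s}$ for $2\le k\le n+1$ — and using $\sum_{t=1}^{n+1}\binom{n+3}{t}=2^{n+3}-n-5$ and $\sum_{k=2}^{n+1}\binom{n+3}{n-k+2}=2^{n+3}-n-5-\binom{n+3}{2}$, the total collapses to $2+2(n+4)+2(2^{n+3}-n-5)-\binom{n+3}{2}=2^{n+4}-\tfrac{(n+3)(n+2)}{2}$.

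The main obstacle is the completeness step of the previous paragraph: showing that no extremal ray of $\NE(X^{1,n}_{n+3})^{\vee}$ has been overlooked. For $r\le n+2$ the dual cone is simple enough that this is transparent (whence Corollaries \ref{Nefcn+1} and \ref{Nefcn+2} follow immediately by duality), but for $r=n+3$ the cone sits in $\mathbb{R}^{n+5}$ and is cut out by up to $4(n+3)=28$ inequalities with a rich incidence pattern, so one genuinely needs either the symmetry reduction together with a careful combinatorial enumeration of the saturated inequality sets, or a machine-assisted polyhedral computation. This is the one part of the argument that is not a one-line verification, and it is also where the hypothesis $n\le 4$ enters.
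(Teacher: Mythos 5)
Your proposal is correct and follows essentially the same route as the paper, whose entire proof is the one-line statement that the corollary follows by duality from Proposition \ref{Mori_n+3} (with the actual polyhedral dualization relegated to the Magma scripts mentioned in the final remark). Your explicit list of inequalities, the nefness checks of the listed classes, and the ray count $2+2(n+4)+2(2^{n+3}-n-5)-\binom{n+3}{2}=2^{n+4}-\frac{(n+3)(n+2)}{2}$ are all accurate, so you have simply spelled out the computation the paper leaves implicit.
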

\begin{proof}
The claim follows by duality from Proposition \ref{Mori_n+3}. 
\end{proof}

\section{The Cox ring of $X^{1,n}_{n+1}$}\label{SCox_n+1}
The $n$-dimensional complex torus $T=(\mathbb{C}^{*})^{n}$ acts on $\mathbb{P}^{1}\times\mathbb{P}^{n}$ as follows:
\stepcounter{thm}
\begin{equation}\label{action}
\begin{array}{ccc}
T\times(\mathbb{P}^{1}\times\mathbb{P}^{n})& \longrightarrow & \mathbb{P}^{1}\times\mathbb{P}^{n}\\
((t_{1},\dots,t_{n}),([x_{0},x_{1}],[y_{0},\dots,y_{n}])) & \longmapsto & ([x_{0},x_{1}],[y_{0},t_{1}y_{1},t_{2}y_{2},\dots,t_{n}y_{n}]).
\end{array}
\end{equation}
Let us write the $n+1$ general points $p_1,\dots,p_{n+1}\in \mathbb{P}^{1}\times\mathbb{P}^{n}$ as follows:
$$
\begin{array}{l}
p_{1} = ([\alpha_{1},\beta_{1}],[1,0,\dots,0]);\\
p_{2} = ([\alpha_{2},\beta_{2}],[0,1,0,\dots,0]);\\
p_{3} = ([\alpha_{3},\beta_{3}],[0,0,1,0,\dots,0]);\\
p_{4} = ([\alpha_{4},\beta_{4}],[0,0,0,1,0,\dots,0]);\\
\vdots\\
p_{n+1} = ([\alpha_{n+1},\beta_{n+1}],[0,\dots,0,1]);
\end{array}
$$
where $[\alpha_{1},\beta_{1}] = [0,1]$, $[\alpha_{2},\beta_{2}] = [1,0]$ and $[\alpha_{3},\beta_{3}] = [1,1]$. Note that (\ref{action}) lifts to an action of $T$ on $X^{1,n}_{n+1}$ whose orbits of maximal dimension have codimension one in $X^{1,n}_{n+1}$. Hence, $X^{1,n}_{n+1}$ has complexity one and Theorem \ref{HS} implies that $X^{1,n}_{n+1}$ is a Mori dream space. We begin by proving the following stronger statement:

\begin{Proposition}\label{logf}
The variety $X_{r}^{1,n}$ is log Fano for $r\leq n+1$.
\end{Proposition}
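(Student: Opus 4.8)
The plan is to produce an explicit effective $\mathbb{Q}$-divisor $D$ on $X^{1,n}_r$ with $-(K_{X^{1,n}_r}+D)$ ample and $(X^{1,n}_r,D)$ Kawamata log terminal, which by Definition \ref{logF} proves that $X^{1,n}_r$ is log Fano. If $n=1$ then $X^{1,1}_r$ is the blow-up of $\mathbb{P}^1\times\mathbb{P}^1$ at $r\le 2$ general points, hence a del Pezzo surface and in particular Fano, so one takes $D=0$; assume therefore $n\ge 2$. I would begin by recording the adjunction formula for the blow-up,
$$
K_{X^{1,n}_r}=\pi^*K_{\mathbb{P}^1\times\mathbb{P}^n}+n\sum_{i=1}^{r}E_i=-2H_1-(n+1)H_2+n\sum_{i=1}^{r}E_i .
$$

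The support of $D$ will consist of strict transforms of "hyperplane cylinders". Extend $\widetilde{\pi}_2(p_1),\dots,\widetilde{\pi}_2(p_r)$ to a collection of $n+1$ general points of $\mathbb{P}^n$ by choosing further general points $q_{r+1},\dots,q_{n+1}$, and for $j=1,\dots,n+1$ let $\Pi_j\subset\mathbb{P}^n$ be the hyperplane through all of these $n+1$ points other than the $j$-th one. Let $S_j\subset X^{1,n}_r$ be the strict transform of $\mathbb{P}^1\times\Pi_j$. Since $\widetilde{\pi}_2(p_i)$ lies on exactly the $n$ hyperplanes $\Pi_j$ with $j\ne i$, we get $S_j=H_2-\sum_{1\le i\le r,\ i\ne j}E_i$ in $\Pic(X^{1,n}_r)$, whence
$$
\sum_{j=1}^{n+1}S_j=(n+1)H_2-n\sum_{i=1}^{r}E_i,\qquad -\Bigl(K_{X^{1,n}_r}+\sum_{j=1}^{n+1}S_j\Bigr)=2H_1=\widetilde{\pi}_1^*\O_{\mathbb{P}^1}(2),
$$
which is nef. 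I would then set $D:=\bigl(1-\tfrac1n\bigr)\sum_{j=1}^{n+1}S_j$, so that
$$
-(K_{X^{1,n}_r}+D)=2H_1+\tfrac1n\Bigl((n+1)H_2-n\sum_{i=1}^{r}E_i\Bigr)=2H_1+\tfrac{n+1}{n}H_2-\sum_{i=1}^{r}E_i .
$$

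It then remains to verify the two required properties. For ampleness I would use Proposition \ref{Mori_n+1}: $\NE(X^{1,n}_r)$ is the polyhedral cone generated by the classes $h_1-e_i$, $h_2-e_i$ and $e_i$, and a direct computation shows that $-(K_{X^{1,n}_r}+D)$ meets these generators in $1$, $\tfrac1n$ and $1$ respectively, all positive, so by Kleiman's criterion $-(K_{X^{1,n}_r}+D)$ is ample (any coefficient in $(1-\tfrac2n,1)$ works just as well). For the klt property it suffices to prove that $\Supp(D)=\bigcup_{j=1}^{n+1}S_j$ is a simple normal crossings divisor, since all its coefficients equal $1-\tfrac1n<1$: because the $n+1$ chosen points are general the $\Pi_j$ are hyperplanes in general position, so $\bigcup_j(\mathbb{P}^1\times\Pi_j)$ is simple normal crossings on $\mathbb{P}^1\times\mathbb{P}^n$; moreover $p_i$ lies on exactly the $n$ cylinders $\mathbb{P}^1\times\Pi_j$ with $j\ne i$, which meet transversally along the curve $\mathbb{P}^1\times\{\widetilde{\pi}_2(p_i)\}$, and in the charts of $\Bl_{p_i}$ the strict transforms of these $n$ cylinders together with $E_i$ are $n+1$ coordinate hyperplanes, so the simple normal crossings property is preserved under the blow-ups. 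Hence $(X^{1,n}_r,D)$ is klt and $X^{1,n}_r$ is log Fano; in particular it is a Mori dream space by \cite[Corollary 1.3.2]{BCHM10}.

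The substance of the argument is the choice of $D$, which is forced by the Mori cone: since $-K_{X^{1,n}_r}\cdot(h_1-e_i)=2-n<0$ for $n\ge 3$ the variety is not Fano, so any admissible boundary must contain, with sufficiently large coefficient, divisors dominating the curves $h_1-e_i$, and the cylinders $S_j$ are the natural candidates. The only genuinely technical point is the last one, namely that $\bigcup_j(\mathbb{P}^1\times\Pi_j)$ stays simple normal crossings after blowing up the points $p_i$; this is a routine chart computation relying on the general position of the $p_i$.
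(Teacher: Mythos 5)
Your proof is correct and follows essentially the same route as the paper: the boundary is the same divisor $(n+1)H_2-n\sum_i E_i$ (the union of the $n+1$ hyperplane cylinders) scaled by a coefficient in the admissible range $((n-2)/n,1)$ — you fix $\epsilon=1-\tfrac1n$ where the paper keeps $\epsilon$ variable — with ampleness checked against the Mori cone of Proposition \ref{Mori_n+1} and the klt condition via simple normal crossings with coefficients below one. The only real difference is that the paper treats $r=n+1$ and deduces $r<n+1$ from \cite[Theorem 2.9]{PS09}, whereas you handle all $r\leq n+1$ uniformly by completing the blown-up points to $n+1$ general points; both are valid.
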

\begin{proof}
Consider the case $r = n+1$. The result for $r < n+1$ will then follow from \cite[Theorem 2.9]{PS09}. The claim amounts to find an effective $\mathbb{Q}$-divisor $\Delta$ on $X_{n+1}^{1,n}$ such that $-K_{X_{n+1}^{1,n}}-\Delta$ is ample, and the pair $(X_{n+1}^{1,n}, \Delta)$ is Kawamata log terminal. Recall that 
$$    
-K_{X_{n+1}^{1,n}} = 2H_1 + (n+1)H_2 - nE_1 - \dots - nE_{n+1}
$$
and consider the divisor
$$D =(n+1)H_2 -nE_1 -\dots -nE_{n+1}.$$
The divisor $D$ is effective since it is the pull-back of the effective divisor on $\P^n$ consisting in the union of the $(n+1)$ hyperplanes passing through $n$ among the $n+1$ projections of the blown-up points. Set $\Delta_\epsilon = \epsilon D$ with $\epsilon\in\mathbb{Q}_{>0}$, and consider the divisor
$$-K_{X_{n+1}^{1,n}}-\Delta_\epsilon = 2H_1 + ((n+1)-\epsilon (n+1))H_2 + \sum_{i=1}^{n+1}(\epsilon n - n)E_i.$$
By Proposition \ref{Mori_n+1} we have that $\NE(X_{n+1}^{1,n})=\langle h_{1}-e_{i},h_{2}-e_{i},e_{i}\rangle$. Since
$$
\begin{array}{lll}
(-K_{X_{n+1}^{1,n}}-\Delta_\epsilon)\cdot e_{i}=n-\epsilon n>0 & \text{if and only if} &  \epsilon<1;\\
(-K_{X_{n+1}^{1,n}}-\Delta_\epsilon)\cdot(h_{1}-e_{i})=2-n+\epsilon n>0 & \text{if and only if} & \epsilon>(n-2)/n;\\
(-K_{X_{n+1}^{1,n}}-\Delta_\epsilon)\cdot(h_{2}-e_{i})=n+1-\epsilon(n+1)-n+\epsilon n>0 & \text{if and only if} & \epsilon<1;\\
\end{array}
$$
and hence the divisor $-K_{X_{n+1}^{1,n}}-\Delta_\epsilon$ is ample if and only if $(n-2)/n<\epsilon<1$.

Next, we show that the pair $(X_{n+1}^{1,n}, \Delta_\epsilon)$ is Kawamata log terminal. Consider the images $q_i =\pi_2(p_i)$ of the blown-up points via the second projection. There exist $n+1$ hyperplanes in $\P^n$ through each subset of $n$ points among the $q_i$, and through each $q_i$ there pass $n$ of them. Let us denote by $\widetilde{H}_i\subset X_{n+1}^{1,n}$ the strict transforms of the inverse images via the second projection of these hyperplanes. The divisor $D$ has multiplicity $n$ along the strict transforms of the curves $\pi_2^{-1}(q_i)$ for all $i$. Hence, any $n$ among the divisors $\widetilde{H}_i$ intersect transversally along one of the strict transforms of these curves. 

Generalizing this argument, fix a set of $m$ points among the $q_i$, with $1\leq m \leq (n-1)$, and denote by $\Lambda_m$ their linear span. The divisor $D$ has multiplicity $n+1-m$ along the strict transform $\widetilde{\Lambda}_m$ of $\pi_2^{-1}(\Lambda_m)$. Any $n+1-m$ among the $\widetilde{H}_i$ intersect transversally along a subvariety of type $\widetilde{\Lambda}_m\subset X_{n+1}^{1,n}$, and hence $\Delta_\epsilon$ is a simple normal crossing effective divisor for all $\epsilon > 0$. Summing-up, we proved that for all $(n-2)/n<\epsilon<1$ the divisor $-K_{X_{n+1}^{1,n}}-\Delta_\epsilon$ is ample and the pair $(X_{n+1}^{1,n},\Delta_\epsilon)$ is Kawamata log terminal concluding the proof.
\end{proof}

Let $\widetilde{H}_i\subset X_{n+1}^{1,n}$ be the strict transform of $\pi_2^{-1}(\{y_i = 0\})$. For $p\in\mathbb{P}^1\setminus\{\pi_1(p_1),\dots,\pi_1(p_{n+1})\}$ the fiber $F_p = \widetilde{\pi}_1^{-1}(p)$ is isomorphic to $\mathbb{P}^n$ while $F_{p_i} = \widetilde{\pi}_1^{-1}(p_i)$ is the union of two irreducible components $D_{i,1}\cup\ D_{i,2}$ where $\ D_{i,1}$ is isomorphic to $\mathbb{P}^n$ blown-up in a point with exceptional divisor $\overline{E}_i\cong\mathbb{P}^{n-1}$, $\ D_{i,2} = E_i$ and $D_{i,1}\cap D_{i,2} = \overline{E}_i$. Recall that $E_i\cong\mathbb{P}^n$ and denote by $R_i\subset E_i$ the union of the $T$-invariant divisors of the lifting of (\ref{action}) restricted to $E_i$. Then 
$$X_0 = X_{n+1}^{1,n}\setminus\left(\bigcup_{i=0}^n\widetilde{H}_i \cup \bigcup_{j=1}^{n+1}R_j\right)\subset X_{n+1}^{1,n}$$
is the $T$-invariant open subset of points of $X_{n+1}^{1,n}$ having isotropy group of dimension zero. Hence, the $\widetilde{H}_i$ are the $T$-invariant prime divisors supported in $X_{n+1}^{1,n}\setminus X_0$. Moreover, note that the isotropy group of a general point of both $D_{i,1},D_{i,2}$, with respect to the lifting of (\ref{action}), is trivial. In particular, it is finite of order one. 

Now, associate to the $\widetilde{H}_i$ variables $S_i$, to the $D_{i,1}$ variables $T_{i,1}$ and to the $D_{i,2}$ variables $T_{i,2}$. For $1\leq i\leq n-1$ set $k = j+1 = i+2$ and
$$
g_i = (\beta_k\alpha_j-\beta_j\alpha_k)T_{i,1}T_{i,2} + (\beta_i\alpha_k-\beta_k\alpha_i)T_{j,1}T_{j,2} + (\beta_j\alpha_i-\beta_i\alpha_j)T_{k,1}T_{k,2}.
$$
Then we have the following result:

\begin{thm}\label{Cox_n+1}
For the Cox ring of $X_{n+1}^{1,n}$ we have the following explicit presentation
$$
\Cox(X_{n+1}^{1,n}) \cong \frac{\mathbb{C}[S_0,\dots,S_n,T_{1,1},T_{1,2},\dots,T_{n+1,1},T_{n+1,2}]}{\left\langle g_1,\dots,g_{n-1}\right\rangle}
$$
where $S_i$ is the section associated to $H_2-E_1-\dots-E_{i-1}-E_{i+1}-\dots -E_{n+1}$, $T_{i,1}$ is the section associated to $H_1-E_i$ and $T_{i,2}$ is the section associated to $E_i$. 
\end{thm}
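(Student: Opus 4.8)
The plan is to derive the presentation as a direct application of Theorem \ref{HS} to the complexity-one action of $T=(\mathbb{C}^{*})^{n}$ given by (\ref{action}). This action lifts to $X_{n+1}^{1,n}$ because $T$ acts trivially on the $\mathbb{P}^{1}$-factor and fixes each $q_{i}=\pi_{2}(p_{i})$, so every $p_{i}$ is a $T$-fixed point and blowing it up preserves the action. Thus it suffices to: (i) pin down the geometric data feeding Theorem \ref{HS}, namely the rational quotient, the $T$-invariant prime divisors, their generic isotropy groups, and the reducible fibres; (ii) read off the resulting presentation; and (iii) check that the induced $\Cl(X_{n+1}^{1,n})$-grading matches the divisor classes in the statement.

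For step (i), note that $\widetilde{\pi}_{1}\colon X_{n+1}^{1,n}\to\mathbb{P}^{1}$ is $T$-invariant and its general fibre $F_{p}\cong\mathbb{P}^{n}$ carries the action $[y_{0},\dots,y_{n}]\mapsto[y_{0},t_{1}y_{1},\dots,t_{n}y_{n}]$ with a dense orbit, so $\widetilde{\pi}_{1}$ realises the rational quotient $X_{0}/T\dasharrow\mathbb{P}^{1}$. A $T$-invariant prime divisor dominating $\mathbb{P}^{1}$ restricts on $F_{p}$ to a $T$-invariant divisor of $\mathbb{P}^{n}$, hence to one of the $n+1$ coordinate hyperplanes $\{y_{i}=0\}$; its closure is $\widetilde{H}_{i}$, and a general point of $\widetilde{H}_{i}$ has one-dimensional isotropy (the $i$-th factor subtorus for $i\geq 1$, the diagonal for $i=0$). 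So $\widetilde{H}_{0},\dots,\widetilde{H}_{n}$ are exactly the $T$-invariant prime divisors supported in $X_{n+1}^{1,n}\setminus X_{0}$, which gives the variables $S_{0},\dots,S_{n}$. A $T$-invariant prime divisor that does not dominate $\mathbb{P}^{1}$ is a component of a fibre; every fibre is an irreducible copy of $\mathbb{P}^{n}$ except those over the $n+1$ distinct points $\pi_{1}(p_{1}),\dots,\pi_{1}(p_{n+1})$, each of which splits as $D_{i,1}\cup D_{i,2}$ with $D_{i,2}=E_{i}$. On each of $D_{i,1}$ (a blow-up of $\mathbb{P}^{n}$ at a point), on $E_{i}\cong\mathbb{P}^{n}$ and on the generic fibres the torus still has a dense orbit, so their generic isotropy is trivial; in particular no $T$-invariant prime divisor has generic isotropy of order $>1$. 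Hence in Theorem \ref{HS} one is forced to use $\pi_{1}(p_{1}),\dots,\pi_{1}(p_{n+1})$ as the points carrying the distinguished fibres, each contributing a factor $f_{i}=T_{i,1}T_{i,2}$ (two components, each of generic isotropy order one), and producing $n-1$ trinomial relations.

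For step (ii), Theorem \ref{HS} then yields
$$
\Cox(X_{n+1}^{1,n})\;\cong\;\mathbb{C}[S_{0},\dots,S_{n},T_{i,1},T_{i,2};\,1\leq i\leq n+1]\big/\langle g_{1},\dots,g_{n-1}\rangle,
$$
where, after a harmless shift of indices, writing $\pi_{1}(p_{i})=[\alpha_{i},\beta_{i}]$ and $k=j+1=i+2$,
$$
g_{i}=(\beta_{k}\alpha_{j}-\beta_{j}\alpha_{k})T_{i,1}T_{i,2}+(\beta_{i}\alpha_{k}-\beta_{k}\alpha_{i})T_{j,1}T_{j,2}+(\beta_{j}\alpha_{i}-\beta_{i}\alpha_{j})T_{k,1}T_{k,2},
$$
which is precisely the trinomial introduced before the theorem. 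For step (iii) it remains to identify the invariant divisor classes: the hyperplane $\{y_{i}=0\}$ contains all but one of the $q_{j}$, so $\widetilde{H}_{i}$ is linearly equivalent to $H_{2}$ minus $n$ of the exceptional divisors $E_{j}$, which is the class assigned to $S_{i}$; the reducible fibre $\widetilde{\pi}_{1}^{-1}(\pi_{1}(p_{i}))=D_{i,1}+E_{i}$ has class $H_{1}$ and contains only $p_{i}$ among the blown-up points, so $D_{i,1}\sim H_{1}-E_{i}$; and $D_{i,2}=E_{i}$. These match the associations in the statement, and the proof is complete.

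I expect step (i) to be the crux: one must determine precisely which prime divisors are $T$-invariant and compute their generic isotropy carefully enough to be certain that the $\widetilde{H}_{i}$ exhaust the invariant divisors outside $X_{0}$, and that \emph{both} components $D_{i,1}$ and $D_{i,2}$ of every reducible fibre have trivial generic stabiliser — this is what forces each $f_{i}$ to be the squarefree product $T_{i,1}T_{i,2}$ and guarantees that no further variables or relations enter the presentation. Once this geometric bookkeeping is settled, the invocation of Theorem \ref{HS} and the divisor-class computations are routine.
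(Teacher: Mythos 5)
Your proposal is correct and follows exactly the route the paper takes: it sets up the complexity-one data for Theorem \ref{HS} (the quotient via $\widetilde{\pi}_1$, the invariant divisors $\widetilde{H}_i$ outside $X_0$, the reducible fibres $D_{i,1}\cup E_i$ over the $\pi_1(p_i)$ with trivial generic isotropy, hence $f_i=T_{i,1}T_{i,2}$ and $n-1$ trinomials) and then applies Hausen--S\"u{\ss}. In fact you spell out the isotropy computations and the divisor-class identifications in more detail than the paper, which relegates all of this to the paragraphs preceding the theorem and concludes with a one-line appeal to Theorem \ref{HS}.
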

\begin{proof}
In the previous set up it is enough to apply Theorem \ref{HS}.
\end{proof}

In the rest of this section we will derive some of consequences of Theorem \ref{Cox_n+1}.

\begin{Proposition}\label{Mov_n+1}
Consider the following divisor classes on $X_{n+1}^{1,n}$:
$$
\begin{array}{lll}
D_1 & = & H_{1};\\ 
D_h & = & H_{2}-E_{i_{1}}-\dots-E_{i_{h}};\\
D_{i_1,\dots,i_n} & = & H_{1}+H_{2}-E_{i_{1}}-\dots-E_{i_{n}};\\
D_{1,\dots,n+1} & = & H_{1}+H_{2}-E_{1}-\dots-E_{n+1};\\
D_k & = & kH_{2}-kE_{i_{1}}-\dots-kE_{i_{(n-k)}}-(k-1)E_{i_{(n-k+1)}}-\dots-(k-1)E_{i_{n+1}};\\
D_{n} & = & nH_{2}-(n-1)E_{1}-\dots-(n-1)E_{(n+1)}.
\end{array} 
$$
The movable cone of $X^{1,n}_{n+1}$ is given by
$$\Mov(X_{n+1}^{1,n})=\langle D_1,D_h,D_{i_1,\dots,i_n},D_{1,\dots,n+1},D_k,D_n\rangle$$
for $2\leq k\leq n-1$, $0\leq h\leq n-1$ and $\{{i_1,\dots,i_n}\}\subset\{1,\dots,n+1\}$.

Furthermore, let $\Mov_{1}(X_{n+1}^{1,n})$ be the cone of curves covering a divisor in $X_{n+1}^{1,n}$. Then 
$$
\Mov_{1}(X_{n+1}^{1,n}) = \left\langle h_1,h_2-e_i,h_1+(n-1)h_2-e_{i_1}-\dots -e_{i_n},e_i\right\rangle
$$
for $1\leq i\leq n+1$, $\{i_1,\dots,i_n\}\subset\{1,\dots,n+1\}$.
\end{Proposition}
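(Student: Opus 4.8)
The plan is to derive the description of $\Mov(X_{n+1}^{1,n})$ directly from the Cox ring presentation of Theorem \ref{Cox_n+1}, and then to read off $\Mov_1(X_{n+1}^{1,n})$ by duality. First I would record that the $3(n+1)$ sections $S_0,\dots,S_n,T_{1,1},\dots,T_{n+1,2}$ of Theorem \ref{Cox_n+1} form a minimal system of pairwise non-associated $\Cl(X_{n+1}^{1,n})$-prime homogeneous generators of $\Cox(X_{n+1}^{1,n})$ — being the canonical sections of the prime divisors $\widetilde{H}_i$, $D_{i,1}$ and $D_{i,2}$ — with degrees the $n+1$ classes $H_2-\sum_{j\ne\ell}E_j$ together with $H_1-E_i$ and $E_i$ for $i=1,\dots,n+1$. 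For a Mori dream space with such a presentation $\Eff(X_{n+1}^{1,n})$ is the cone spanned by these degrees, and a class $D$ is movable if and only if for every generator $x_\rho$ some positive multiple of $D$ is a non-negative combination of the degrees of the remaining generators, i.e.
$$
\Mov(X_{n+1}^{1,n})=\bigcap_{\rho}\cone\big(\deg x_i:\ i\ne\rho\big).
$$
This turns the first assertion into a polyhedral computation in $N^1(X_{n+1}^{1,n})\cong\R^{n+3}$, which is moreover equivariant for the $S_{n+1}$-action permuting the blown-up points and the associated generators.

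For the inclusion $\supseteq$ I would check that each listed class lies in every cone $\cone(\deg x_i:i\ne\rho)$. The classes $D_1=H_1$, $D_{i_1,\dots,i_n}$ and $D_{1,\dots,n+1}$ are nef by Corollary \ref{Nefcn+1}, hence movable. For the remaining classes one exhibits, for each generator $x_\rho$, an explicit effective decomposition avoiding $\deg x_\rho$; by the $S_{n+1}$-symmetry it suffices to treat one $\rho$ in each of the three orbits. The relevant identities are $D_h=\deg S_i+\sum_{\ell\notin\{i_1,\dots,i_h,i\}}\deg T_{\ell,2}$ for any point $i\notin\{i_1,\dots,i_h\}$ — the hypothesis $h\le n-1$ leaving exactly enough room to dodge any prescribed generator — and, by a short linear-algebra computation, $D_k=\sum_{j\in J}\deg S_j+\deg T_{j_0,2}$ (and $D_n$ likewise, with $k=n$), where $J$ runs over the $k$-element subsets of the $k+1$ points carrying coefficient $k-1$ and $j_0$ is the remaining such point; the freedom in the choice of $J$ and $j_0$ is precisely what lets one avoid any single generator.

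For the reverse inclusion I would argue by a squeeze against the dual side. On one hand, any irreducible curve $C$ whose deformations sweep out a subvariety $V$ of codimension at most one pairs non-negatively with every movable divisor $M$, since $\textbf{B}(M)$ has codimension $\ge2$, so $M|_V$ is pseudo-effective and $C$ moves in $V$; hence the cone generated by the curves $h_1$ (a fibre of $\widetilde{\pi}_2$), $h_2-e_i$ (the strict transform of a line through $p_i$ in a fibre of $\widetilde{\pi}_1$), $e_i$ (a line in $E_i$), and $h_1+(n-1)h_2-e_{i_1}-\dots-e_{i_n}$ (the strict transform of an irreducible curve of the corresponding bidegree through the prescribed points), each of which does sweep out a divisor, is contained in $\Mov(X_{n+1}^{1,n})^\vee$, and this cone is by definition contained in $\Mov_1(X_{n+1}^{1,n})$. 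On the other hand $\Mov_1(X_{n+1}^{1,n})\subseteq\Mov(X_{n+1}^{1,n})^\vee$ by the same restriction argument.

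The hard part will be the purely combinatorial fact that the explicit cone $\langle D_1,D_h,D_{i_1,\dots,i_n},D_{1,\dots,n+1},D_k,D_n\rangle$ and the explicit cone $\langle h_1,h_2-e_i,h_1+(n-1)h_2-e_{i_1}-\dots-e_{i_n},e_i\rangle$ are polar duals of one another: once this is established, combining it with the inclusion $\supseteq$ of the second paragraph and with biduality squeezes all the inclusions above to equalities, yielding simultaneously both formulae. Verifying this duality amounts to checking that every listed divisor pairs non-negatively with every listed curve and that enough of these pairings vanish to pin down the facets on each side — equivalently, that each listed facet functional is negative on at most one of the $3(n+1)$ generator-degrees; keeping this bookkeeping finite is exactly where one must exploit the $S_{n+1}$-symmetry and the explicit shape of the $\deg x_\rho$.
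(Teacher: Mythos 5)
Your proposal is correct and follows essentially the same route as the paper: both sides establish $\mathcal{C}=\langle D_1,D_h,\dots\rangle\subseteq\Mov(X^{1,n}_{n+1})$, pass to dual cones, and close the squeeze $\mathcal{K}\subseteq\Mov_1\subseteq\Mov^{\vee}\subseteq\mathcal{C}^{\vee}=\mathcal{K}$ by checking that each listed curve class (in particular $h_1+(n-1)h_2-e_{i_1}-\dots-e_{i_n}$, which sweeps out the strict transform of $\mathbb{P}^1\times\langle q_{i_1},\dots,q_{i_n}\rangle$) covers a divisor, with the polyhedral duality $\mathcal{C}^{\vee}=\mathcal{K}$ left as a direct computation in both treatments. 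The only substantive difference is that you justify movability of the listed divisors via the Cox-ring criterion $\Mov(X)=\bigcap_{\rho}\cone(\deg x_i:\ i\ne\rho)$ together with explicit decompositions into generator degrees, whereas the paper simply asserts that each generator of $\mathcal{C}$ is movable.
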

\begin{proof}
Set $\mathcal{C} = \langle D_1,D_h,D_{i_1,\dots,i_n},D_{1,\dots,n+1},D_k,D_n\rangle$. Since any divisor appearing in $\mathcal{C}$ is movable we have that $\mathcal{C}\subset \Mov(X_{n+1}^{1,n})$. Then taking the dual cones we get $\Mov(X_{n+1}^{1,n})^{*}\subset \mathcal{C}^{*}$. 

Now, $\Mov_{1}(X_{n+1}^{1,n})\subset \Mov(X_{n+1}^{1,n})^{*} \subset \mathcal{C}^{*}$. If we prove that any extremal ray of $\mathcal{C}^{*}$ corresponds to a curve that covers a divisor in $X_{n+1}^{1,n}$ we would have that $\Mov_{1}(X_{n+1}^{1,n})=\Mov(X_{n+1}^{1,n})^{*}=\mathcal{C}^{*}$ and hence $\Mov(X_{n+1}^{1,n}) = \mathcal{C}$ concluding the proof.

A straightforward computation shows that
$$
\mathcal{C} = \left\langle h_1,h_2-e_i,h_1+(n-1)h_2-e_{i_1}-\dots -e_{i_n},e_i\right\rangle
$$
for $1\leq i\leq n+1$, $\{i_1,\dots,i_n\}\subset\{1,\dots,n+1\}$. Clearly, the curves with classes $h_1,h_2-e_i,e_i$ cover divisors in $X_{n+1}^{1,n}$. Now, consider the projections in $\mathbb{P}^n$ of the blown-up points $p_{i_1},\dots,p_{i_n}$ and let $H\subset\mathbb{P}^n$ be the hyperplane spanned by them. Let $X_{n}^{1,n-1}\subset X_{n+1}^{1,n}$ be the strict transform of $\mathbb{P}^1\times H\subset\mathbb{P}^1\times \mathbb{P}^n$. To conclude it is enough to note that the curves of class $h_1+(n-1)h_2-e_{i_1}-\dots -e_{i_n}$ cover $X_{n}^{1,n-1}$.
\end{proof}

\begin{Remark}
Let $X$ be a $\mathbb{Q}$-factorial projective variety. In general $\Mov_{1}(X) \subsetneqq \Mov(X)^{*}$. However, by Proposition \ref{Mori_n+1} we have that $$\Mov_{1}(X_{n+1}^{1,n}) = \Mov(X_{n+1}^{1,n})^{*}.$$ 
For further information on the behavior of these cones for general projective varieties we refer to \cite{Pay06}, \cite{BDPS23}.
\end{Remark}

\begin{Remark}
We developed Magma \cite{Magma97} scripts computing the nef cones of $X^{1,n}_{n+1},X^{1,n}_{n+2}$ and of $X^{1,n}_{n+3}$ for $n\leq 4$, as well as the Mori chamber decomposition of $X^{1,n}_{n+1}$. A Magma library containing the scripts can be downloaded at the following link:
\begin{center}
\url{https://github.com/msslxa/Cox-rings-of-blow-ups-of-multiprojective-spaces}
\end{center}
We managed to compute the Mori chamber decomposition of $X^{1,n}_{n+1}$ for $n = 2,3,4$, and we got $92,550$ and $6307$ chambers respectively. Finally, we would like to mention that the Mori chamber decomposition of $X^{1,2}_6$ has been fully computed by T. Grange in \cite[Chapter 3]{Gra22}.
\end{Remark}

\bibliographystyle{amsalpha}
\bibliography{Biblio}
\end{document}